\documentclass[12pt, leqno]{article}
\usepackage{amsmath}
\usepackage{amsthm}
\usepackage{amssymb}
\usepackage{latexsym}
\usepackage{graphicx}
\allowdisplaybreaks[1]
\usepackage{amsfonts}
\usepackage{ascmac}
\usepackage{color}
\usepackage{enumerate}

\theoremstyle{definition}
\theoremstyle{plain}

\allowdisplaybreaks[1]

%

\topmargin=0mm
\headheight=0mm
\headsep=0mm
\textwidth=158mm
\textheight=243mm
\oddsidemargin=0mm
\evensidemargin=0mm
\baselineskip=4mm
\parskip=5pt
\parindent=13pt
\pagestyle{plain}
\date{}
%

%
%
\newtheorem{Thm}{Theorem}[section]
\newtheorem{Prop}[Thm]{Proposition}
\newtheorem{Lemma}[Thm]{Lemma}

\newcommand{\bu}{\bar{u}}

\newcommand{\dt}{\Delta t}
\newcommand{\dx}{\Delta x}

\newcommand{\dis}{\displaystyle}

\newcommand{\norm}{\parallel}

\newcommand{\Z}{{\mathbb Z}}

\newcommand{\T}{{\mathbb T}}
\newcommand{\N}{{\mathbb N}}
\newcommand{\R}{{\mathbb R}}


\newcommand{\bv}{\bar{v} }
\newcommand{\buc}{\bar{u}^{(c )} }
\newcommand{\bvc}{\bar{v}^{(c )} }

%

\def\text#1{\mbox{#1 }}

\title{\bf Selection problems of $\Z^2$-periodic\\
 entropy solutions and viscosity solutions}
\author{Kohei Soga
\footnote{Unit\'e de math\'ematiques pures et
appliqu\'ees, CNRS UMR 5669  \&  \'Ecole Normale Sup\'erieure de
Lyon, 46 all\'ee d'Italie, 69364 Lyon, France. The work was supported by ANR-12-BS01-0020 WKBHJ as a researcher for academic year 2014-2015, hosted by Albert Fathi. \newline \indent Currently, Department of Mathematics, Faculty of Science and Technology, Keio University, 3-14-1 Hiyoshi, Kohoku-ku, Yokohama, 223-8522, Japan (soga@math.keio.ac.jp).}}
%
\begin{document}
\maketitle
\begin{abstract}
\noindent $\Z^2$-periodic entropy solutions of hyperbolic scalar conservation laws and $\Z^2$-periodic viscosity solutions of Hamilton-Jacobi equations are not unique in general. However, uniqueness holds for viscous scalar conservation laws and viscous Hamilton-Jacobi equations. Ugo Bessi \cite{Bessi} investigated the convergence of approximate $\Z^2$-periodic solutions to an exact one in the process of the vanishing viscosity method, and characterized this physically natural $\Z^2$-periodic solution with the aid of Aubry-Mather theory. In this paper, a similar problem is considered in the process of the finite difference approximation under hyperbolic scaling. We present a selection criterion different from the one in the vanishing viscosity method, which exhibits difference in characteristics between the two approximation techniques.      

\medskip

\noindent{\bf Keywords:} Scalar conservation law; Hamilton-Jacobi equation; entropy solution; viscosity solution; weak KAM theory; finite difference approximation; stochastic Lax-Oleinik type operator; law of large numbers   \medskip

\noindent{\bf AMS subject classifications:}  35L65; 49L25; 37J50; 65M06; 60G50 
\end{abstract}
\setcounter{section}{0}
\setcounter{equation}{0}
\section{Introduction}
We consider hyperbolic scalar conservation laws and the corresponding Hamilton-Jacobi equations
\begin{eqnarray}\label{CL}
&&u_t+H(x,t,c+u)_x=0,\\\label{HJ}
&&v_t+H(x,t,c+v_x)=h(c ),
\end{eqnarray}
where $c, h(c )\in\R$ are given constants. Here, the function $H(x,t,p)$ is assumed to satisfy the following (H1)--(H4):

(H1) $H(x,t,p):\T^2\times\R\to\R$, $C^2$, \quad\\
\indent (H2) $H_{pp}>0$,  \quad\\
\indent(H3) $\dis \lim_{|p|\to+\infty}\frac{H(x,t,p)}{|p|}=+\infty$,

\noindent where $\T:=\R/\Z$. $\T$ is identified with $[0,1)$, and a function defined on $\T$ is regarded as a $1$-periodic function defined on $\R$. We say that a function $f(x,t)$ is $\Z^2$-periodic, if it is $1$-periodic in both $x$ and $t$. It follows from (H1)--(H3) that the Legendre transform $L(x,t,\xi)$ of $H(x,t,\cdot)$ is well-defined, and is given by 
$$L(x,t,\xi)=\sup_{p\in\R}\{\xi p -H(x,t,p)\}.$$ 
Note that the function $L$ satisfies

(L1) $L(x,t,\xi):\T^2\times\R\to\R$, $C^2$, \quad\\
\indent (L2) $L_{\xi\xi}>0$,  \quad\\
\indent (L3) $\dis \lim_{|\xi|\to+\infty}\frac{L(x,t,\xi)}{|\xi|}=+\infty$.

\noindent The final assumption for $H$ is: 

 (H4) There exists $\alpha>0$ such that $|L_x|\le\alpha(|L|+1)$.

\noindent  A flux function $H$ satisfying (H1) and (H2) is common in continuum mechanics. A function $H$ satisfying (H1)--(H4) is common in Hamiltonian dynamics, and is called a Tonelli Hamiltonian. Note that (H4) implies completeness of the Euler-Lagrange flow generated by $L$, and hence the Hamiltonian flow generated by $H$. Aubry-Mather theory extensively investigates the Hamiltonian dynamics and Lagrangian dynamics generated by a Tonelli Hamiltonian $H$ and its Legendre transform $L$.  Weak KAM theory unifies Aubry-Mather theory, $\Z^2$-periodic entropy solutions of (\ref{CL}) and $\Z^2$-periodic viscosity solutions of (\ref{HJ}), providing many useful tools for the analysis of the PDEs and dynamical systems \cite{Fathi}, \cite{Fathi-book}, \cite{WE}. 

We briefly explain the background of our selection problem in terms of the large-time behaviors of entropy solutions and viscosity solutions. 
It is well-known that the initial value problems 
\begin{eqnarray}\label{CLi}
\left\{
\begin{array}{lll}
&\dis u_t+H(x,t,c+u)_x=0\,\,\,\,\mbox{in $\T\times(0,T]$,}\medskip\\
&\dis u(x,0)=u^0(x)\in L^\infty(\T)\,\,\,\,\mbox{on $\T$},\quad\int_\T u^0(x)dx=0,
\end{array}
\right.
\end{eqnarray}
\begin{eqnarray}\label{HJi}
\left\{
\begin{array}{lll}
&\dis v_t+H(x,t,c+v_x)=h(c)\,\,\,\,\mbox{in $\T\times(0,T]$,}\medskip\\
&v(x,0)=v^0(x)\in Lip(\T)\,\,\,\,\mbox{on $\T$}, 
\end{array}
\right.
\end{eqnarray}
are uniquely solvable in the sense of entropy solutions and viscosity solutions, where  $u\in C^0((0,T],L^1(\T))$ and $v\in Lip(\T\times(0,T])$, respectively.  In the spatially one-dimensional case, (\ref{CLi}) and (\ref{HJi}) are equivalent in the sense that the entropy solution $u$ or viscosity solution $v$ is derived from the other if $u^0=v^0_x$. In particular, we have  $u=v_x$ (see, e.g., \cite{Bernard}).
From now on we assume that $u^0=v^0_x$, and that an entropy solution $u\in C^0((0,T],L^1(\T))$ means the representative element given by $v_x$. 

The viscosity solution $v$ of (\ref{HJi}) exists for $T\to+\infty$ and tends to a time-periodic viscosity solution of (\ref{HJ}) with a period greater or equal to one as $t\to\infty$ \cite{Bernard-R} (and the references cited there). The periodic state may depend on initial data, namely, time-periodic viscosity solutions with each period are not unique with respect to $c$ in general. Since the entropy solution  $u$ is equal to $v_x$, similar large-time behaviors and the multiplicity of periodic states hold for (\ref{CLi}) and (\ref{CL}) \cite{Bernard}. In particular, $\Z^2$-periodic entropy solutions of (\ref{CL}) and $\Z^2$-periodic viscosity solutions of (\ref{HJ}) are not unique with respect to $c$ in general. Note that for these large-time behaviors, $h(c )$ must be a special value called the ``effective Hamiltonian''. Otherwise $v$ has linear growth in time.   The effective Hamiltonian is $C^1$ in one-dimensional problems.  For details on the effective Hamiltonian, see \cite{Bernard}. From now on, $h(c )$ is assumed to be the effective Hamiltonian.

It is common to approximate the entropy solution $u$ of (\ref{CLi}) and viscosity solution $v$ of (\ref{HJi}) by the smooth solutions of the following parabolic equations with $\nu>0$, the periodic boundary condition and the same initial data as the above, 
\begin{eqnarray}\label{CLiv}
&&u^\nu_t+H(x,t,c+u^\nu)_x=\nu u^\nu_{xx},\\\label{HJiv}
&&v^\nu_t+H(x,t,c+v^\nu_x)=h^\nu(c )+\nu v^\nu_{xx}.
\end{eqnarray}
 Within each bounded time interval, the convergence $u^\nu\to u$ and $v^\nu\to v$ as $\nu\to 0$ can be proved. This is called the  {\it vanishing viscosity method}.  For each initial data, the solutions $u^\nu$ and $v^\nu$ exist for $T\to+\infty$, and tend to time-periodic solutions $\bu^\nu$ of (\ref{CLiv}) and $\bv^\nu$ of (\ref{HJiv}) as $t\to\infty$, respectively. Unlike the inviscid problems, $\bu^\nu$ and $\bv^\nu$ are unique with respect to $c$ and their period is exactly equal to one (to be precise, $\bv^\nu$ is unique up to constants). Note that we need to choose an appropriate constant $h^\nu(c )$ for the asymptotic behavior. $h^\nu(c )$ is also called the effective Hamiltonian, and tends to $h(c )$ as $\nu\to0$. From the family $\{\bu^\nu\}_{\nu>0}$ (resp. $\{\bv^\nu\}_{\nu>0}$ (adding a constant to $\bv^\nu$, if necessary)), we can take a convergent subsequence, whose limit is a $\Z^2$-periodic entropy solution of (\ref{CL}) (resp. $\Z^2$-periodic viscosity solution of (\ref{HJ})) \cite{JKM}, \cite{Bessi}. Since $\Z^2$-periodic entropy solutions and viscosity solutions are not necessarily unique with respect to $c$,  an interesting problem arises: {\it  Does $\{\bu^\nu\}_{\nu>0}$ (resp. $\{\bv^\nu\}_{\nu>0}$) accumulate on a single $\Z^2$-periodic entropy solution $\bu$ (resp. $\Z^2$-periodic viscosity solution $\bv$) as $\nu\to0$? If this is the case, how can $\bu$ and $\bv$ be characterized?}  This problem is partially solved in \cite{JKM}, \cite{Bessi}. 

Selection problems arise also in higher dimensional stationary problems 
\begin{eqnarray}\label{HJs}  
H(x,t,c+\bv_x)=h(c )\mbox{\,\,\, in $\T^n$},
\end{eqnarray}
where the viscosity solutions are not unique in general with respect to $c\in\R^n$.  The vanishing viscosity method also works with the elliptic equation 
\begin{eqnarray}\label{HJsv}  
H(x,t,c+\bv^\nu_x)=h^\nu(c )+\nu  \Delta\bv^\nu \mbox{\,\,\, in $\T^n$},
\end{eqnarray}
which is uniquely (up to constants) solvable for each $\nu>0$ and $c$. From the family $\{  \bv^\nu\}_{\nu>0}$ (adding a constant to each $\bv^\nu$, if necessary), we can take a convergent subsequence, whose limit is a viscosity solution of (\ref{HJs}). The selection problem in (\ref{HJs}) and (\ref{HJsv}) is partially solved in \cite{Nalini}. There is another approximation method called the {\it ergodic approximation} with the discounted Hamilton-Jacobi equation 
\begin{eqnarray}\label{disco}  
\varepsilon v^\varepsilon+H(x,t,c+v^\varepsilon_x)=h(c ) \mbox{\,\,\, in $\T^n$}. 
\end{eqnarray}
This is also uniquely solvable for each $\varepsilon>0$ and $c$. From the family $\{  \bv^\varepsilon\}_{\varepsilon>0}$, we can take a convergent subsequence, whose limit is a viscosity solution of (\ref{HJs}).  The selection problem in (\ref{HJs}) and (\ref{disco}) is partially solved in \cite{Iturriaga} and then almost completely in  \cite{Davini}, based on weak KAM theory. Recently, another approach to this selection problem is announced in \cite{MT} based on the nonlinear adjoint method, which covers degenerate viscous Hamilton-Jacobi equations as well. 

The {\it finite difference approximation} is also a common technique to obtain the entropy solution $u$ of (\ref{CLi}) and  viscosity solution $v$ of (\ref{HJi}), which is a simple and realistic approximation available on a computer. In the case of a Tonelli Hamiltonian, however, it is not easy to verify the stability and convergence of finite difference schemes for $T\to+\infty$ and to study large-time behaviors of difference solutions for possible periodic states. Recently, the author announced a new approach to the Lax-Friedrichs finite difference scheme based on probability theory and calculus of variations, and obtained new results on time-global stability, large-time behaviors, error estimates, existence of periodic difference solutions, etc., with lots of useful details in application of the scheme to weak KAM theory \cite{Soga2}, \cite{Soga3}.   These arguments pose a selection problem of $\Z^2$-periodic entropy solutions and viscosity solutions as well. A Lax-Oleinik type operator for the Lax-Friedrichs  scheme, which is introduced in \cite{Soga2}, \cite{Soga3}, is the basic tool for the investigation of the selection problem. We discretize (\ref{CLi}) with the Lax-Friedrichs scheme and  (\ref{HJi}) with a scheme so that the following two difference equations are equivalent:
\begin{eqnarray}\label{CL-Delta}
&& \frac{u^{k+1}_{m+1}-\frac{(u^k_{m}+u^k_{m+2})}{2}}{\dt}
+\frac{H(x_{m+2},t_k,c+u^k_{m+2})-H(x_m,t_k,c+u^k_m)}{2\dx}
=0,\\\label{HJ-Delta}
&&\frac{ v^{k+1}_{m} - \frac{(v^k_{m-1}+v^k_{m+1})}{2} }{\dt}+H(x_{m},t_k,c+\frac{v^k_{m+1}-v^k_{m-1}}{2\dx})
=h_\Delta(c ),
\end{eqnarray}   
We will give details on the discretization in Section 3. It is proved that the two schemes are globally stable with fixed $\Delta=(\dx,\dt)$, if $\lambda:=\dt/\dx<\lambda_1$ with an appropriate number $\lambda_1$ and if $h_\Delta(c )$ is chosen properly. $h_\Delta(c )$ is also called the effective Hamiltonian for the difference Hamilton-Jacobi equation, and tends to $h(c )$ as $\Delta\to0$. Furthermore,  any solutions $u^k_m$ and $v^k_{m+1}$, tend to periodic difference solutions $\bu^k_m$ of (\ref{CL-Delta}) and $\bv^k_{m+1}$ of (\ref{HJ-Delta}) as the time-index $k\to+\infty$, respectively. $\bu^k_m$ and $\bv^k_{m+1}$ are unique (up to constant for $\bv^k_{m+1}$) with respect to $c$, and have the period one. 
Let $\bu_\Delta$ be the step function derived from $\bu^k_m$, and let $\bv_\Delta$ be the linear interpolation of $\bv^k_m$. Then, from the family $\{ \bu_\Delta \}_{\dx>0,\dt=\lambda \dx}$ (resp. $\{ \bv_\Delta \}_{\dx>0,\dt=\lambda \dx}$ (adding a constant to each $\bv_\Delta$, if necessary)) with {\it hyperbolic scaling}:  $0<\lambda_0\le\lambda<\lambda_1$ ($\lambda$ is fixed), we can take a convergent subsequence, whose limit is a $\Z^2$-periodic entropy solution of (\ref{CL}) (resp. $\Z^2$-periodic viscosity solution of (\ref{HJ})). Here is the selection problem:      
\medskip

\noindent{\bf Selection problem.} {\it  Does $\{ \bu_\Delta \}_{\dx>0,\dt=\lambda \dx}$ (resp. $\{ \bv_\Delta \}_{\dx>0,\dt=\lambda \dx}$)  accumulate on a single $\Z^2$-periodic entropy solution $\bu$ (resp. $\Z^2$-periodic viscosity solution $\bv$) as $\Delta\to0$ under hyperbolic scaling? If this is the case, how can $\bu$ and $\bv$ be characterized?}  
\medskip

\noindent It follows from \cite{Oleinik} that if we take {\it diffusive scaling}, i.e., $\dx\to0$ with $\dx^2/\dt=O(1)$, then $\bu_\Delta$ and $\bv_\Delta$ tend to the unique $\Z^2$-periodic solutions of (\ref{CLiv}) and (\ref{HJiv}), respectively, and no selection problem arises. 

The purpose of this paper is to present an answer to the  selection problem above, comparing it with the selection problem in the vanishing viscosity method \cite{Bessi}. The contributions of this work are: 
\begin{itemize}
\item[(1)] This is the first attempt to formulate and solve the selection problem of $\Z^2$-periodic entropy solutions and viscosity solutions in the  finite difference approximation, 
\item[(2)]Mathematically and physically new aspects of the finite difference approximation are made clear in contrast with the vanishing viscosity method, particularly from the viewpoints of scaling limit of random walks,
 \item[(3)] Our result leads to better understanding of various numerical experiments.   
\end{itemize}
\setcounter{section}{1}
\setcounter{equation}{0}
\section{Result}
Throughout this paper, $\bu$ denotes a $\Z^2$-periodic entropy solution of (\ref{CL}), $\bv$ a $\Z^2$-periodic viscosity solution of (\ref{HJ}), $\bu^k_m$ a $\Z^2$-periodic difference solution of (\ref{CL-Delta}), $\bv^k_{m+1}$ a $\Z^2$-periodic difference solution of (\ref{HJ-Delta}), $\bu_\Delta$ the step interpolation of $\bu^k_m$ and $\bv_\Delta$ the linear interpolation of $\bv^k_{m+1}$. In order to specify the value $c$, the notation $\buc$, $\bvc$, $\buc_\Delta$, $\bvc_\Delta$, $\bu^k_m(c )$, $\bv^k_{m+1}(c )$, etc., is sometimes used. Define quotient  maps 
\begin{eqnarray*}
&&pr: \R\ni x\mapsto x\hspace{-3mm}\mod1\in\T,\\
&&pr:\R^2\ni (x,s)\mapsto(x\hspace{-3mm}\mod1,s\hspace{-3mm}\mod1)\in\T^2,\\
&&pr:\R^3\ni (x,s,y)\mapsto(x\hspace{-3mm}\mod1,s\hspace{-3mm}\mod1,y)\in\T^2\times\R.
\end{eqnarray*} 
In this paper, we use the term ``projection'' for the operation of $pr$ (it does not mean ``$\T\times\R\ni(x,y)\mapsto x\in\T$''). 

 According to weak KAM theory, we have a characteristic curve $\gamma:(-\infty,t]\to\R$ of $\bvc$ or $\buc=\bvc_x$ such that 
 $$\mbox{$\gamma(t)=x$\,\,\, and\,\,\, $\gamma'(s)=H_p(\gamma(s),s,c+\buc(\gamma(s),s))$, $s<t$}$$
 for each $x,t\in\R$. Furthermore, $\gamma$ falls into a global characteristic curve $\gamma^\ast:\R\to\R$ as $s\to-\infty$ (otherwise $\gamma$ is a part of a global characteristic curve). The curve $\gamma^\ast$ has the rotation number 
 $$\lim_{|s|\to\infty}\frac{\gamma^\ast(s)}{s}=h'(c ).$$
  For each $c$, the set 
$$\mathcal{M}^{(c )}:=\bigcup_{\gamma^\ast}\{ pr(\gamma^\ast(s), s)\,\,|\,\,s\in\R\}\subset \T^2$$ 
is called the Aubry-Mather set for $c$, where the union is taken over all the global characteristic curves $\gamma^\ast$ of all $\Z^2$-periodic entropy solutions or viscosity solutions with $c$. It is known that, if the rotation number $h'(c )$ is irrational, $\buc$ and $\bvc$ are unique (up to constants for $\bvc$) \cite{WE}. Hence, we consider the case where $h'(c )$ is rational. In particular, we deal with the situation where the  global characteristic curves yielding $\mathcal{M}^{(c )}$ form smooth hyperbolic stable/unstable manifolds, and the graph of each $\buc$, 
$$\mbox{graph$(c+\buc):=\{ (x,t,c+\buc(x,t))\,|\,x,t\in\T\}$},$$
consists of parts of these stable/unstable manifolds. It is easy to see an example of such a situation with non-uniqueness of $\bu$ and $\bv$ through explicitly solvable problems given by Hamiltonians of the form $H(x,t,p)=\frac{1}{2}p^2-F(x)$.  Here are the assumptions for our selection problem: Throughout this paper, $\lambda=\dt/\dx$ is fixed with $0<\lambda_0\le\lambda<\lambda_1$, where $\lambda_1$ is from the stability analysis of (\ref{CL-Delta}) and (\ref{HJ-Delta}) and $\lambda_0$ is arbitrary. 
\begin{itemize}{\it 
\item[(A1)] For all $c\in[c_0,c_1]$, the Aubry-Mather set $\mathcal{M}^{(c )}$ consists of common hyperbolic periodic orbits $\gamma^\ast_1, \ldots,\gamma^\ast_I$ with a rational rotation number $p/q$, $q\in\N,p\in\Z$, where $0\le \gamma^\ast_1(0)<\gamma^\ast_2(0)<\cdots<\gamma^\ast_I(0)<1$ and $\{pr(\gamma^\ast_i(s),s)\,|\,s\in\R\}\neq \{pr(\gamma^\ast_j(s),s)\,|\,s\in\R\}$ for $i\neq j$.  
\item[(A2)] If $c=c_0$ (resp. $c_1$), graph$(c+\buc)$ coincides with the lower separatrix (resp. the upper separatrix) formed by the projected $C^1$-stable/unstable manifolds of $\gamma^\ast_1,\ldots, \gamma^\ast_I$. 
\item[(A3)] Let $\mathcal{U}^i_{loc}(\delta)$ denote the unstable manifold of $\gamma^\ast_i$ restricted to 
$$U^i(\delta):=\{ (x,t)\in\R^2\,|\, |x-\gamma^\ast_i(t)|\le\delta\}.$$
Let $\zeta^i(x,t)$ be a $C^2$-generating function of the local unstable manifold $\mathcal{U}^i_{loc}(\delta)$, i.e., $\{(x,t,\zeta^i_x(x,t))\,|\,(x,t)\in U^i(\delta)\}=\mathcal{U}^i_{loc}(\delta)$, and set  
$$ \Gamma^i:=\int_0^q\{\zeta^i_{xx}(\gamma^\ast_i(s),s)-\lambda^2\zeta^i_{tt}(\gamma^\ast_i(s),s)\}ds,\,\,\,i=1,\ldots,I.$$
Then, $\min_i \Gamma^i$ is attained by only one $i=i^\ast$.    
\item[(A4)] If the limit $\buc$ of a convergent subsequence $\{\buc_\Delta\}$ satisfies graph$(c+\buc)\supset pr\mathcal{U}^i_{loc}(\delta)$ for small $\delta>0$, then $\buc_\Delta$ is equi-Lipschitz with respect to $x$ on $U^i(\delta)$, i.e.,
$$|\bu^k_{m+2}(c)-\bu^k_m(c)|\le \theta_0\cdot2\dx\mbox{ for all $(x_m,t_k),(x_{m+2},t_k)\in U^i(\delta)$,}$$
with $\theta_0$ independent of $\Delta$, $c$ and the choice of the subsequence. 
}\end{itemize} 
We refer to known results on $\buc_\Delta$ and $\bvc_\Delta$  in Section 3. Before stating our result, we recall the Peierls barriers, which play an important role in Aubry-Mather theory.  For each $c$, the Peierls  barrier $h^{(c )}_p(x,t;y,\tau):\T^2\times\T^2\to\R$ is defined as 
\begin{eqnarray*}
h^{(c )}_p(x,t;y,\tau):=\liminf_{T\in\N,T\to\infty}\left[ \inf \int^{\tau+T}_t \{L(\kappa(s),s,\kappa'(s))-c\kappa'(s)+h(c )\}ds   \right],
\end{eqnarray*}
where the infimum is taken over all absolutely continuous curves $\kappa:[t,\tau+T]\to\T$ with $\kappa(\tau+T)=y$ and $\kappa(t)=x$. For details on the Peierls barrier, see \cite{Mather2}, \cite{Fathi-book}. Our main result is: 
\begin{Thm}\label{main}
Suppose that (A1)--(A4) hold and that $c\in(c_0,c_1)$. Add a constant to $\bvc_\Delta$ so that $\bar{v}^{(c )}_\Delta(\gamma^\ast_{i^\ast}(0),0)=0$. Then, as $\Delta\to0$ with $0<\lambda_0\le\dt/\dx=\lambda<\lambda_1$, 
\begin{enumerate}
\item[(1)] $\bar{v}^{(c )}_\Delta$ converges to $h^{(c )}_p(\gamma^\ast_{i^\ast}(0),0;\cdot,\cdot)$ uniformly,
\item[(2)] $\bar{u}^{(c )}_\Delta$ converges to $\bar{u}^{(c )}=(h^{(c )}_p(\gamma^\ast_{i^\ast}(0),0;\cdot,\cdot))_x$ pointwise a.e. and in the $C^0(\T;L^1(\T))$-norm. A geometrical characterization of $\buc$ is that there exists $\delta>0$ such that graph$(c+\bar{u}^{(c )})$ contains $pr\mathcal{U}^i_{loc}(\delta)$ only for $i=i^\ast$.
\end{enumerate}
\end{Thm}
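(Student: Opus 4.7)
My plan is to exploit the stochastic Lax--Oleinik representation of the Lax--Friedrichs scheme that the author developed in \cite{Soga2}, \cite{Soga3}. Under that representation, the periodic difference solution $\bvc_\Delta$ at a node $(x_m,t_k)$ can be written as an infimum, over initial nodes $(x_{m_0},t_{k_0})$ with $k_0\ll k$, of an expectation $E[\sum L(\cdot)\dt - c\cdot(\text{increment}) + h_\Delta(c)\dt]$ taken along a random walk that moves by $(\pm\dx,\dt)$ according to a Markov kernel built from the scheme, plus the value of $\bvc_\Delta$ at the starting node. Since $\dt/\dx=\lambda$ is fixed, the rescaled walk satisfies a law of large numbers with deterministic drift together with a diffusive correction whose variance has size $\dx$, precisely because a step of length $\dx$ in space is taken over time $\lambda\dx$. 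The first step of the proof is therefore to make this stochastic variational principle the workhorse and to identify all minimizing sequences of initial nodes as $\Delta\to 0$.

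Next, I would use (A1)--(A2) together with the upper-semicontinuity of the Aubry set and the hyperbolicity of the periodic orbits $\gamma^\ast_1,\ldots,\gamma^\ast_I$ to show that, for any weak accumulation point $\bvc$ of $\{\bvc_\Delta\}$, the minimizing initial nodes must condense onto one of the sets $\{pr(\gamma^\ast_i(s),s)\}$; that is, for each $(x,t)$ the optimal back-characteristics of $\bvc_\Delta$ shadow some $\gamma^\ast_i$. Reducing to a single orbit $i$, the crucial step is a Laplace-type expansion: writing $\kappa(s)=\gamma^\ast_i(s)+\eta(s)$ for the path of the walker, expanding $L-cu+h(c)$ to second order around $\gamma^\ast_i$, and using the generating-function identity $\zeta^i_x(\gamma^\ast_i(s),s)$ = momentum on the unstable manifold, one rewrites the quadratic form in the fluctuation $\eta$ in terms of $\zeta^i_{xx}$. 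The diffusive correction from the random walk's time-step is where $\lambda^2\zeta^i_{tt}$ enters with a minus sign (the hyperbolic scaling converts temporal fluctuations into spatial cost through the CFL parameter $\lambda$), yielding, after one turn of the period $q$, a log-weight proportional to $\Gamma^i=\int_0^q(\zeta^i_{xx}-\lambda^2\zeta^i_{tt})ds$. This is precisely the mechanism that distinguishes the hyperbolic selection criterion from Bessi's parabolic one, which contains only $\zeta^i_{xx}$.

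Given assumption (A3), only $i=i^\ast$ minimizes $\Gamma^i$, so in the limit only back-characteristics emanating from $\gamma^\ast_{i^\ast}$ are selected. Fixing the normalization $\bvc_\Delta(\gamma^\ast_{i^\ast}(0),0)=0$, the Lax--Oleinik formula then converges, as $\Delta\to0$, to the Peierls barrier $h^{(c)}_p(\gamma^\ast_{i^\ast}(0),0;x,t)$: the infimum of Lagrangian actions from the distinguished base point. Standard weak KAM compactness (equi-Lipschitz bounds on $\bvc_\Delta$ derived in \cite{Soga2}) upgrades pointwise selection to uniform convergence, giving (1). Claim (2) follows by differentiating: uniform convergence of $\bvc_\Delta$ plus the equi-Lipschitz bound (A4) on $\buc_\Delta$ in $x$ yields, along any subsequence, an a.e.\ and $C^0(\T;L^1(\T))$ limit $\buc=(h^{(c)}_p(\gamma^\ast_{i^\ast}(0),0;\cdot,\cdot))_x$; the geometric characterization of $\graph(c+\buc)$ comes from the classical fact that the support of calibrated sub-differentials of a Peierls barrier based at a point of $\gamma^\ast_{i^\ast}$ is contained in the projected stable/unstable manifolds of that single orbit.

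The hardest step will be the Laplace-type expansion in the second paragraph: one must control the stochastic Lax--Oleinik expectation along a random walk that visits the orbit $\gamma^\ast_i$ over $O(1/\dx)$ periods, proving that the Gaussian fluctuation cost converges to $\tfrac12\Gamma^i$ per period uniformly in $\Delta$, while also ruling out non-Gaussian contributions (large deviations away from $\gamma^\ast_i$) by hyperbolicity. Combining a discrete Freidlin--Wentzell estimate with the contraction on unstable manifolds should suffice, but the bookkeeping with the CFL parameter $\lambda$, and the verification that $\lambda^2\zeta^i_{tt}$ indeed arises with the right sign from the quadratic variation of the $\dt$-direction of the walk, are the delicate points. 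Once $\Gamma^i$ is correctly identified, (A3) does the selection and the remaining arguments are weak KAM routine.
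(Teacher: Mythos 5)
Your high-level skeleton -- stochastic Lax--Oleinik representation, concentration of minimizing walks on a single hyperbolic orbit, Peierls barrier as the limit, $(\mathrm{A}3)$ as the selection rule -- matches the paper. But the heart of the argument, the mechanism by which $\Gamma^i=\int_0^q(\zeta^i_{xx}-\lambda^2\zeta^i_{tt})\,ds$ is extracted, is different in the paper and your proposed route has a genuine gap.

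You attribute $\Gamma^i$ to a Laplace-type second-order expansion of the Lagrangian action around $\gamma^\ast_i$, with $\lambda^2\zeta^i_{tt}$ arising from ``quadratic variation of the $\dt$-direction of the walk.'' This cannot be right: the walk moves deterministically in time (every step is $-\dt$), so there is no temporal fluctuation to contribute a variance. Moreover, expanding $L^{(c)}$ to second order would produce terms in $L_{xx},L_{x\xi},L_{\xi\xi}$, not the mixed second derivatives $\zeta^i_{xx},\zeta^i_{tt}$ of the generating function, and it is not clear how that quadratic form would reduce to $\Gamma^i$. The paper instead obtains $\Gamma^i$ from the consistency (truncation) error of the Lax--Friedrichs average: for a smooth extension $v$ of $\bv|_{U^i(\delta)}$ one has $D_t v^{k+1}_m+H(x_m,t_k,c+D_xv^k_{m+1})=h(c)-\tfrac{\dx}{2\lambda}A(x_m,t_k)+o(\dx)$ with $A=v_{xx}-\lambda^2 v_{tt}$. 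This is a numerical-viscosity term, not a fluctuation cost. The proof then compares the stochastic Lax--Oleinik representations of $v^k_{m+1}$ and of a carefully patched function $w^k_{m+1}$ (equal to $\bv^k_{m+1}$ on $U^i(\delta)$, extended by $v$ shifted by the boundary mismatch outside), with the base node $n$ chosen to extremize $w^0_{m+1}-v^0_{m+1}$ so that the boundary term has the right sign; integrating $A$ along the shadowed orbit gives $(h(c)-h_\Delta(c))qT \lessgtr T\Gamma^i\tfrac{\dx}{2\lambda}\pm(\cdots)$ on both sides. Your sketch skips this comparison entirely, and without it the $O(\dx)$ Chebyshev error terms are not visibly subordinate to $\dx\Gamma^i$.

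Relatedly, your use of $(\mathrm{A}4)$ is misplaced. You invoke it to upgrade convergence of $\buc_\Delta$, but the paper needs it earlier and for a sharper reason: in the lower-bound estimate the minimizing control is $\xi^{\ast\,k+1}_m=H_p(x_m,t_k,c+D_x\bar v^k_{m+1})$, built from the difference solution itself rather than from the smooth $v$; $(\mathrm{A}4)$ supplies the Lipschitz condition needed in Proposition~\ref{eta}(2) to get $\sigma^k=O(\dx)$ for this control. Without it the variance could be $O(1)$, Chebyshev gives no useful decay, and the selection argument collapses. You would need to address this point explicitly; as written, your Freidlin--Wentzell/large-deviations plan does not engage with the fact that the walk is space-time inhomogeneous and not a sum of i.i.d.\ increments, which is exactly why Proposition~\ref{eta} and hypothesis $(\mathrm{A}4)$ are introduced.

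Finally, to conclude the theorem you also need the analogues of Propositions~\ref{existence} and~\ref{extension}: one must first show (for $c\in J^-\cap J^+$, using an auxiliary viscosity solution $\hat v$ whose graph contains $\mathcal{U}^\pm_k$, and the monotonicity $c+\bu^k_m(c)$ in $c$) that limits actually do have a transition point at $\gamma^\ast_{i^\ast}$, then extend to all $c\in(c_0,c_1)$. Your proposal jumps from ``only $i^\ast$ minimizes $\Gamma^i$'' to ``the selected limit is the Peierls barrier at $\gamma^\ast_{i^\ast}$'' without establishing that the subsequential limits have a transition point at $\gamma^\ast_{i^\ast}$ at all; Proposition~\ref{key} by itself only rules out simultaneous transition at two orbits with distinct $\Gamma$.
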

We compare our result with the one in the vanishing viscosity method \cite{Bessi}. Our setting given by (A1) and (A2) is essentially the same as the one in \cite{Bessi}.  In both of the cases, the basic ingredients are stochastic Lax-Oleinik type operators for (\ref{HJiv}) and (\ref{HJ-Delta}), and estimates for the rate of the law of large numbers realized in the operators as $\nu\to0$ and $\Delta\to0$. In \cite{Bessi}, the standard framework of the stochastic Lax-Oleinik type operator for viscous Hamilton-Jacobi equations \cite{Fleming} is used. We will see our stochastic Lax-Oleinik type operator for (\ref{HJ-Delta}) in Section 3.  Although the mechanisms of the selection are similar to each other, the selection criteria are different. Roughly speaking, our case also relies on the so-called numerical viscosity of the Lax-Friedrichs scheme, which causes diffusive effect similar to that of the artificial viscosity term $\nu v^\nu_{xx}$. However, the speed of the diffusion caused by the numerical viscosity is $\dx/\dt=\lambda^{-1}$, and due to hyperbolic scaling, it is finite for $\Delta\to0$. This leads to the different selection criterion.  In fact, (A3), which gives our selection criterion, is different from the one in \cite{Bessi}. For the vanishing viscosity method, the values 
$$\tilde{\Gamma}^i:=\int_0^q\zeta^i_{xx}(\gamma^\ast_i(s),s)ds,\quad i=1,\ldots,I$$
appear, and $\bv^\nu(c )$ selects $h^{(c )}_p(\gamma^\ast_{\tilde{i}^\ast}(0),0;\cdot,\cdot)$ with $\tilde{i}^\ast$ that minimizes $\tilde{\Gamma}^i$. In our case, the selection criterion contains the discretization  parameter $\lambda(\ge\lambda_0>0)$ and we may change the selection by varying $\lambda$, which means that our finite difference method with hyperbolic scaling is more ``flexible'' or ``closer'' to the exact hyperbolic problem than the vanishing viscosity method. Our method does not always select the physically natural solutions selected by the vanishing viscosity method. If we choose $\lambda>0$ small enough, then our selection is the same as the one in the vanishing viscosity method. 	 
This is an interesting difference between the two methods, since  they  are often regarded as mathematically similar due to their diffusive effects. 
At the current stage, the regularity assumption (A4) plays an essential role yielding an appropriate estimate for the rate of the law of large numbers, whereas there is no such assumption in \cite{Bessi}.  This is due to different structures of the stochastic Lax-Oleinik type operators. In the case of the vanishing viscosity method, it is enough to deal with the standard Brownian motions. In our case, we have to investigate continuous limits of space-time inhomogeneous random walks with hyperbolic scaling. It is not easy to prove the law of large numbers and to estimate its rate for such random walks \cite{Soga1}. As far as the author knows,  justification of (A4) is an open question, though computer simulations imply that it would be true. Particularly, in the case of entropy solutions with finite number of shocks, their Lipschitzian parts seem to be approximated in equi-Lipschitzian ways (see, e.g., \cite{Nishida-Soga}).   
\setcounter{section}{2}
\setcounter{equation}{0}
\section{Preliminaries}
We state several known facts on $\Z^2$-periodic entropy solutions, $\Z^2$-periodic viscosity solutions and our difference schemes, as well as space-time inhomogeneous random walks arising in the difference equations. 
\subsection{$\Z^2$-periodic viscosity solution}
Let $\bv=\bvc$ be a $\Z^2$-periodic viscosity solution of (\ref{HJ}) which is periodically extended to $\R^2$. Then, $\bv$ satisfies the following equality for each $x,t\in\R$ and $\tau<t$ (the deterministic Lax-Oleinik type operator): 
\begin{eqnarray}\label{value}
\,\,\,\bv(x,t)=\inf_{\gamma\in AC,\,\,\gamma(t)=x}\left\{ \int^t_\tau L^{(c)}(\gamma(s),s,\gamma'(s))ds+\bv(\gamma(\tau),\tau) \right\}+h(c )(t-\tau),    
\end{eqnarray}
where  $AC$ is the family of all absolutely continuous curves $\gamma:[\tau,t]\to\R$ and
$$L^{(c)}(x,t,\xi):=L(x,t,\xi)-c\xi$$
is the Legendre transform of $H(x,t,c+\cdot)$. There is a minimizing curve $\gamma^\ast$, which is a $C^2$-backward characteristic curve of $\bv$, and solves the Euler-Lagrange equation generated by $L^{(c )}$. Moreover, $\bv$ is differentiable with respect to $x$ on the minimizing curve, satisfying the relation
\begin{eqnarray}\label{back}
\gamma^\ast{}'(s)=H_p(\gamma^\ast(s),s,c+\bv_x(\gamma^\ast(s),s))\quad\mbox{for $\tau\le s<t$}.
\end{eqnarray}
Note that,  if $\bv_x(x,t)$ exists, $s=t$ is included in (\ref{back}) and $\gamma^\ast$ is the unique minimizing curve for (\ref{value}). 
Each $\Z^2$-periodic viscosity solution $\bvc$ belongs to $Lip(\T^2)$, and each $\Z^2$-periodic entropy solution $\buc$ belongs to $Lip(\T;L^1(\T))$. We have $\buc=\bvc_x$ and 
\begin{eqnarray}\label{repre}
\bvc(x,t)&=&\bar{U}^{(c )}(x,t)-\int^t_0\int^1_0\{\bar{U}^{(c )}_t(x,\tau)+H(x,\tau,c+\bar{U}^{(c )}_x(x,\tau))\}dxd\tau\\\nonumber
&&+h(c )t,
\end{eqnarray} 
where $\bar{U}^{(c )}(x,t):=\int_0^x\buc(y,t)dy$. For more details, see \cite{Cannarsa}, \cite{Bernard}, \cite{Fathi-book}, \cite{WE}. 
\subsection{Discretization}
Here are some details of our discretization and results on the difference equations obtained in \cite{Soga2}, \cite{Soga3}.  Let $K,N$ be natural numbers with $N\le K$. The mesh size $\Delta=(\dx,\dt)$ is defined by $\dx:=(2N)^{-1}$ and $\dt:=(2K)^{-1}$. Set $\lambda:=\dt/\dx$, $x_m:=m\dx$ for $m\in\Z$ and $t_k:=k\dt$ for $k=0,1,2,\ldots$. Let $(\dx\Z)\times(\dt\Z_{\ge0})$ be the set of all $(x_m,t_k)$, and let
$$\mathcal{G}_{even}\subset (\dx\Z)\times(\dt\Z_{\ge0})\qquad\mbox{(resp. $\mathcal{G}_{odd}\subset (\dx\Z)\times(\dt\Z_{\ge0})$)}$$
be the set of all $(x_m,t_k)$ with $k=0,1,2,\ldots$ and $m\in\Z$ such that  $m+k$ is even (resp. odd), which is called the even grid (resp. odd grid). For $x\in\R$ and $t>0$, the notation $m(x),k(t)$ denotes the integers $m,k$ for which $x\in[x_m,x_m+2\dx)$ on $\mathcal{G}_{even}$ or $\mathcal{G}_{odd}$ and $t\in[t_k,t_k+\dt)$, respectively. Note that $m(x)$ on $\mathcal{G}_{even}$ and $m(x)$ on $\mathcal{G}_{odd}$ are different for the same $x$. We discretize (\ref{CL}) on $\mathcal{G}_{even}$ by the Lax-Friedrichs scheme as (\ref{CL-Delta}). We also discretize (\ref{HJ}) in $\mathcal{G}_{odd}$ as (\ref{HJ-Delta}). 
We introduce the following notation:
$$D_tw^{k+1}_m:=\frac{w^{k+1}_m-\frac{w^k_{m-1}+w^k_{m+1}}{2}}{\dt},\quad D_xw^{k}_{m+1}:=\frac{w^{k}_{m+1}-w^k_{m-1}}{2\dx}.$$
Note that (\ref{CL-Delta}) and (\ref{HJ-Delta}) are equivalent. In particular, if $v^k_m$ satisfies (\ref{HJ-Delta}), then  $u^k_m:=D_xv^k_{m+1}$ satisfies (\ref{CL-Delta}).  Throughout this paper, we follow the convention that the sum of the super-script and sub-script of variables defined on $\mathcal{G}_{even}$ (resp. $\mathcal{G}_{odd}$) is always even (resp. odd) in the notation $\bu^k_m, \bu^{k}_{m(x)},\bv^k_{m+1}, \bv^k_{m(x)}$, etc. We say that $\bu^k_m$ and $\bv^k_{m+1}$ are $\Z^2$-periodic, if they satisfy 
$$\mbox{$\bu^k_{m\pm 2N}=\bu^{k+2K}_{m}=\bu^k_m$,\,\,\,\,\,\,$\bv^k_{m+1\pm 2N}=\bv^{k+2K}_{m+1}=\bv^k_{m+1}$ \,\,\,\,for all $m,k$.}$$  
Let $\bu_\Delta$ be the step function derived from a $\Z^2$-periodic difference  solution $\bu^k_m$, i.e.,
$$\mbox{$\bu_\Delta(x,t):=\bu^k_m$\,\,\,\, for $(x,t)\in[x_{m-1},x_{m+1})\times[t_k,t_{k+1})$.}$$
Let $\bv_\Delta$ be the linear interpolation with respect to the space variable derived from a $\Z^2$-periodic difference solution $\bv^k_{m+1}$, i.e.,
$$\mbox{$\bv_\Delta(x,t):=\bv^k_{m-1}+D_x\bv^k_{m+1}\cdot(x-x_{m-1})$\,\,\,\, for $(x,t)\in[x_{m-1},x_{m+1})\times[t_k,t_{k+1})$.}$$
Note that $\bv_\Delta(x,\cdot)$ is a step function for each fixed $x$ and that $(\bv_\Delta)_x=\bu_\Delta$. 

We sometimes use a phrase like ``a convergent subsequence $\{ \bv_\Delta\}$ which tends to $\bv$ as $\Delta\to0$'', meaning ``a convergent sequence $\{\bv_{\Delta_j}\}_{j\in\N}$ with $\Delta_j\to0$ which tends to $\bv$ as $j\to\infty$''.  
\begin{Prop}[\cite{Soga3}]\label{preliminary} Let $[c_0,c_1]$ be an arbitrary interval. There exist $\lambda_1>0$ and $\delta_1>0$ such that for any $\Delta=(\dx,\dt)$ with $\dx<\delta_1$ and $\lambda=\dt/\dx<\lambda_1$  we have the following statements for each $c\in [c_0,c_1]${\rm:} 
\begin{enumerate}
\item[(1)] There exists one and only one  number $h_\Delta(c ) $ for which (\ref{HJ-Delta}) has the unique (up to constants) $\Z^2$-periodic difference solution $\bv^k_{m+1}$. 
\item[(2)] There exists the unique  $\Z^2$-periodic difference solution $\bu^k_{m}=D_x\bv^k_{m+1}$ of (\ref{CL-Delta}), which is uniformly bounded with respect to $m,k$, $\Delta$ and $c$ with the entropy condition 
$$\frac{\bu^k_{m+2}-\bu^k_m}{2\dx}\le M\mbox{\quad for all $k,m$},$$ 
 where $M>0$ is independent of $\Delta$ and $c$. In particular, the stability condition (CFL-condition) is verified: 
$$|H_p(x_m,t_k,c+\bu^k_m)|\le \lambda_1^{-1}<\lambda^{-1}.$$ 
If $c<\tilde{c}$, then $c+\bu^k_m(c )<\tilde{c }+\bu^k_m(\tilde{c})$ for all $k,m$.
 \end{enumerate}
As $\Delta\to0$ under hyperbolic scaling, i.e., $0<\lambda_0\le\lambda<\lambda_1$,
 \begin{enumerate}
 \item[(3)] $h_\Delta(c )$ converges to the exact effective Hamiltonian $h(c )$  uniformly on $[c_0,c_1]$ with the order $\sqrt{\dx}$.  
\item[(4)] Adding a constant to each $\bv_\Delta$ if necessary, we can subtract a convergent subsequence, which tends to a function $\bv$ uniformly. $\bv$ is a $\Z^2$-periodic viscosity solution of (\ref{HJ}). 
\item[(5)]  Let $\{\bv_\Delta\}$ be the convergent subsequence in (4). Then $\bu_\Delta=(\bv_\Delta)_x$ converges to the $\Z^2$-periodic entropy solution $\bu=\bv_x$ pointwise except for the points of non-differentiability of $\bv$ and also in the $C^0(\T;L^1(\T))$-norm. In particular, $\bu_\Delta$ converges to $\bu$ uniformly in the outside of an arbitrary neighborhood of shocks.  
\end{enumerate}
\end{Prop}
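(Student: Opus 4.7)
The plan is to use the stochastic Lax--Oleinik representation of $\bar{v}^{(c )}_\Delta$ from \cite{Soga2,Soga3} together with a quantitative law of large numbers for the associated controlled random walks, in order to force the limit to coincide with the single Peierls barrier anchored at the orbit $\gamma^\ast_{i^\ast}$ selected by (A3). By Proposition \ref{preliminary}(4), any sequence $\Delta_n\to 0$ admits a subsequence along which $\bar{v}^{(c )}_\Delta$ converges uniformly to a $\Z^2$-periodic viscosity solution $\bar{v}^{(c )}$ of (\ref{HJ}); Proposition \ref{preliminary}(5) simultaneously delivers pointwise a.e.\ and $C^0(\T;L^1(\T))$-convergence of $\bar{u}^{(c )}_\Delta$ to $\bar{v}^{(c )}_x$. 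It therefore suffices to identify every subsequential limit $\bar{v}^{(c )}$ with $h^{(c )}_p(\gamma^\ast_{i^\ast}(0),0;\cdot,\cdot)$. Under (A1)--(A2), every backward characteristic of such a $\bar{v}^{(c )}$ is $\alpha$-asymptotic to some $\gamma^\ast_i$, so outside shocks the graph of $c+\bar{u}^{(c )}$ is a union of restrictions of the projected unstable manifolds $pr\mathcal{U}^i_{loc}(\delta)$; by the deterministic Lax--Oleinik formula (\ref{value}) and the definition of the Peierls barrier, $\bar{v}^{(c )}$ coincides, up to the constant fixed by the theorem's normalization, with $h^{(c )}_p(\gamma^\ast_i(0),0;\cdot,\cdot)$ for at least one $i$. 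The entire task is to show that this $i$ equals $i^\ast$.

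Applying the Legendre transform to (\ref{HJ-Delta}) and iterating over $K'$ steps yields the stochastic Lax--Oleinik operator of \cite{Soga2,Soga3},
\begin{equation*}
\bar{v}^{(c )}_\Delta(x_m,t_k)=\inf_{\xi^\Delta}\mathbb{E}_{\xi^\Delta}\!\Bigl[\sum_{j=0}^{K'-1}L^{(c)}\!\bigl(X^\Delta_j,t_{k-j},\xi^\Delta_j\bigr)\dt+\bar{v}^{(c )}_\Delta\!\bigl(X^\Delta_{K'},t_{k-K'}\bigr)\Bigr]+h_\Delta(c )K'\dt,
\end{equation*}
where $X^\Delta$ is the backward random walk on $\mathcal{G}_{odd}$ with $\mathbb{P}(X^\Delta_{j+1}=X^\Delta_j\mp\dx)=(1\pm\lambda\xi^\Delta_j)/2$ and per-step variance $\dx^2-(\dt\xi^\Delta_j)^2$. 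Choose $K'=Kq/\dt$ with $K=K(\Delta)\to\infty$ at a rate fixed below. Superlinearity, $h_\Delta(c )\to h(c )$ (Proposition \ref{preliminary}(3)), and (A1) force the mean path of any near-minimizing control to be $o_\Delta(1)$-close to some $\gamma^\ast_i$ over the final periods, so the infimum decomposes, up to $o(1)$, into a minimum over $i\in\{1,\ldots,I\}$ of infima over strategy classes $\mathcal{A}^i_{K,\Delta}$ in which $X^\Delta\subset U^i(\delta)$ throughout and anchors on $\gamma^\ast_i$ at time $0$.

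For $\xi^\Delta\in\mathcal{A}^i_{K,\Delta}$ write $X^\Delta=\gamma^\ast_i+Y^\Delta$. The weak KAM identity $\zeta^i_t+H(x,t,c+\zeta^i_x)=h(c )$ on $\mathcal{U}^i_{loc}(\delta)$ enables a summation-by-parts that rewrites the discrete action minus $h(c )K'\dt$ as a $\zeta^i$-boundary telescope plus a quadratic-in-$Y^\Delta$ remainder; the equi-Lipschitz bound (A4) dominates the cubic-and-higher Taylor error uniformly in $\Delta$ and in the choice of subsequential limit. The quadratic term splits into a spatial contribution $\tfrac12\zeta^i_{xx}\,\mathbb{E}[(Y^\Delta_{j+1}-Y^\Delta_j)^2]/\dt$ arising from the $\dx^2$-variance of the walk, and a temporal contribution $-\tfrac12\lambda^2\zeta^i_{tt}\,\dt$ produced by the alternation $\mathcal{G}_{even}\leftrightarrow\mathcal{G}_{odd}$ under $\dt=\lambda\dx$. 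Summed along $\gamma^\ast_i$ over one period this yields $\tfrac{\lambda^2}{2}\Gamma^i$ up to $o(1)$, with
\begin{equation*}
\Gamma^i=\int_0^q\bigl\{\zeta^i_{xx}(\gamma^\ast_i(s),s)-\lambda^2\zeta^i_{tt}(\gamma^\ast_i(s),s)\bigr\}ds.
\end{equation*}
Hence the gap between anchoring at $i$ and at $i^\ast$ over $K$ periods is $\tfrac{K\lambda^2}{2}(\Gamma^i-\Gamma^{i^\ast})+o(K)$; choosing $K\to\infty$ with $K\dx\to 0$ (e.g.\ $K=\dx^{-1/2}$) keeps the walk within $U^i(\delta)$ while driving the $\Gamma$-bias to $+\infty$ for every $i\ne i^\ast$ by (A3), so the infimum is attained asymptotically only on $\mathcal{A}^{i^\ast}_{K,\Delta}$.

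Under the $i^\ast$-selected strategy the law of large numbers concentrates $X^\Delta$ on a continuous minimizer of $\int L^{(c)}+h(c )(\cdot)$ asymptotic to $\gamma^\ast_{i^\ast}$, so the representation in the second paragraph passes to the infimum (\ref{value}) anchored at $(\gamma^\ast_{i^\ast}(0),0)$; taking $\liminf_{K\to\infty}$ returns $h^{(c )}_p(\gamma^\ast_{i^\ast}(0),0;\cdot,\cdot)$ by definition of the Peierls barrier, proving (1). Claim (2) follows from Proposition \ref{preliminary}(5), and the geometric characterization is immediate since $\gamma^\ast_{i^\ast}$ is by construction the unique orbit whose projected local unstable manifold lies in $\mathrm{graph}(c+\bar{u}^{(c )})$. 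The hardest step is the quadratic expansion: unlike Bessi's vanishing viscosity setting, where It\^o calculus and the Gaussian heat kernel deliver $\tilde{\Gamma}^i=\int\zeta^i_{xx}ds$ cleanly, here the inhomogeneous random walk couples spatial and temporal quadratic variations through $\dt=\lambda\dx$, and pushing the Taylor remainder down to $o(\dx)$ uniformly in the subsequential limit is precisely what necessitates the regularity hypothesis (A4).
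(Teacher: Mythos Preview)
Your proposal does not address the stated proposition at all. Proposition~\ref{preliminary} is a collection of preliminary facts about the Lax--Friedrichs discretization (existence and uniqueness of $h_\Delta(c)$ and of $\Z^2$-periodic difference solutions, uniform bounds, entropy condition, CFL stability, convergence of $h_\Delta(c)$ to $h(c)$, and compactness of $\bar v_\Delta$, $\bar u_\Delta$). In the paper this proposition is not proved; it is quoted from \cite{Soga3} and used as input for the main argument. What you have written is a sketch of a proof of Theorem~\ref{main}, the selection result. Indeed, your argument explicitly invokes ``Proposition~\ref{preliminary}(3),(4),(5)'' as established facts, so viewed as a proof of Proposition~\ref{preliminary} it is circular.

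If your intent was actually to sketch Theorem~\ref{main}, then compared with the paper's proof your outline shares the overall philosophy (stochastic Lax--Oleinik representation, quadratic expansion along $\gamma^\ast_i$ producing $\Gamma^i$, and (A3) forcing $i=i^\ast$), but it glosses over the steps the paper works hardest on. The paper does not argue by letting the time horizon $K\to\infty$ and comparing action costs directly; instead it derives matched upper and lower bounds for $h(c)-h_\Delta(c)$ of the form $T\Gamma^i\frac{\dx}{2\lambda}\pm o(\dx)$ by comparing (\ref{value-Delta}) with an auxiliary representation (\ref{value-Delta2}) built from a $C^2$ extension $v$ of $\bar v$ near $\gamma^\ast_i$ and a carefully patched function $w^k_{m+1}$ (Proposition~\ref{key}). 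The delicate points are controlling the ``boundary'' terms $R_4$ and $R_5$ via Chebyshev with $\sigma^k=O(\dx)$, which is exactly where (A4) enters, and then extending from $c\in J^-\cap J^+$ to all $c\in(c_0,c_1)$ via a monotonicity argument (Propositions~\ref{existence} and~\ref{extension}). Your claim that ``the infimum decomposes, up to $o(1)$, into a minimum over $i$'' and that one can simply send $K\to\infty$ with $K\dx\to0$ hides precisely these difficulties; in particular, keeping the random walk inside $U^i(\delta)$ with high enough probability over a growing horizon is not automatic and is handled in the paper only through the fixed-horizon $qT$ comparison and the $O(\dx)$ variance bound.
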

\noindent We remark that $\bu$ and $\bv$ are obtained as a result of large-time behaviors of any other difference solutions \cite{Soga3}. 

 We show the stochastic and variational structure of (\ref{HJ-Delta}). First, we introduce space-time inhomogeneous random walks in $\mathcal{G}_{odd}$, which play the role of  ``characteristic curves'' for (\ref{CL-Delta}) and (\ref{HJ-Delta}).
For each point  $(x_n,t_{l+1})\in\mathcal{G}_{odd}$, we consider backward random walks $\gamma$ that start from $x_n$ at $t_{l+1}$ and move by $\pm\dx$ in each backward time step $\dt$:
$$\gamma=\{\gamma^k\}_{k=l',l'+1,\ldots,l+1},\quad\gamma^{l+1}=x_{n},\quad \gamma^{k+1}-\gamma^k=\pm\dx.$$
More precisely,  for each $(x_n,t_{l+1})\in\mathcal{G}_{odd}$ we introduce the following objects:
\begin{eqnarray*}
&&X^k:=\{ x_{m+1} \,|\, \mbox{ $(x_{m+1},t_k)\in\mathcal{G}_{odd}$, $|x_{m+1}-x_n|\le(l+1-k)\dx$}\}\mbox{ for }k\le l+1,\\
&&G:=\bigcup_{l'< k\le l+1}\big(X^{k}\times\{t_{k}\}\big)\subset\mathcal{G}_{odd}, \\
&&\xi:G\ni(x_{m+1},t_k)\mapsto\xi^k_{m+1}\in[-\lambda^{-1},\lambda^{-1}],\quad \lambda=\dt/\dx, \\
&&\bar{\bar{\rho}}: G\ni(x_{m+1},t_k)\mapsto\bar{\bar{\rho}}^k_{m+1}:=\frac{1}{2}-\frac{1}{2}\lambda\xi^k_{m+1}\in[0,1],\\
&&\bar{\rho}: G\ni(x_{m+1},t_k)\mapsto\bar{\rho}^k_{m+1}:=\frac{1}{2}+\frac{1}{2}\lambda\xi^k_{m+1}\in[0,1],\\
&&\gamma:\{ l',l'+1,\ldots,l+1\}\ni k\mapsto \gamma^k\in X^k,\mbox{ $\gamma^{l+1}=x_n$, $\gamma^{k+1}-\gamma^k=\pm\dx$},\\
&&\Omega:\mbox{the family of the above $\gamma$}.
\end{eqnarray*}
The value $\bar{\bar{\rho}}^k_{m+1}$ (resp. $\bar{\rho}^k_{m+1}$) is regarded as the probability of transition from $(x_{m+1},t_k)$ to $(x_{m+1}+\dx,t_k-\dt)$ (resp. from $(x_{m+1},t_k)$  to $(x_{m+1}-\dx,t_k-\dt)$). The function $\xi$ is a control for random walks, which plays the role of a velocity field on the grid.
We define the density of each path $\gamma\in\Omega$ as
$$\mu(\gamma):=\prod_{l'< k\le l+1}\rho(\gamma^{k},\gamma^{k-1}),$$
where $\rho(\gamma^{k},\gamma^{k-1})=\bar{\bar{\rho}}^k_{m(\gamma^{k})}$ (resp. $\bar{\rho}^k_{m(\gamma^{k})}$) if $\gamma^{k}-\gamma^{k-1}=-\dx$ (resp. $\dx$).
The density $\mu(\cdot)=\mu(\cdot;\xi)$ yields a probability measure for $\Omega$, i.e.,
$$prob(A)=\sum_{\gamma\in A}\mu(\gamma;\xi)\mbox{\quad for $A\subset\Omega$}. $$
The expectation with respect to this probability measure is denoted by $E_{\mu(\cdot;\xi)}$, namely, for a random variable $f:\Omega\to\R$ we have
$$E_{\mu(\cdot;\xi)}[f(\gamma)]=\sum_{\gamma\in\Omega}\mu(\gamma;\xi)f(\gamma).$$
We use $\gamma$ as the symbol for random walks or a sample path. If necessary, we write $\gamma=\gamma(x_n,t_{l+1};\xi)$ in order to specify its initial point and control.

\indent We state several important results on the scaling limit of such inhomogeneous random walks, obtained in \cite{Soga1}. Note that the variance $\sigma^k:=E_{\mu(\cdot;\xi)}[|\gamma^k-\bar{\gamma}^k|^2]$ with $\bar{\gamma}^k:=E_{\mu(\cdot;\xi)}[\gamma^k]$ is of the order $O(1)$ for inhomogeneous random walks in general, whereas it is of the order $O(\dx)$ for space homogeneous cases (see Remak 3.3 of \cite{Soga1}). Let $\eta(\gamma)=\{\eta^k(\gamma)\}_{k=l',l'+1,\ldots,l+1}$, $\gamma\in\Omega$ be a random variable that is  induced by a random walk $\gamma=\gamma(x_n,t_{l+1};\xi)$ as
\begin{eqnarray}\label{etaeta}
\eta^{l+1}:=\gamma^{l+1},\,\,\,\,\,\, \eta^k(\gamma):=\gamma^{l+1}-\sum_{k< k'\le l+1}\xi(\gamma^{k'},t_{k'})\dt\mbox{ \,\,\, for $l'\le k\le l$}.
\end{eqnarray}
Set $\tilde{\sigma}^k:=E_{\mu(\cdot;\xi)}[|\gamma^k-\eta^k(\gamma)|^2]$ and $ \tilde{d}^k:=E_{\mu(\cdot;\xi)}[|\gamma^k-\eta^k(\gamma)|]$ for $l'\le k\le l+1$.
\begin{Prop}[\cite{Soga1}]\label{eta} 
(1) For any control $\xi$, we have 
$$(\tilde{d}^k)^2\le\tilde{\sigma}^k\le \frac{t_{l+1}-t_k}{\lambda}\dx.$$

(2) If a control $\xi$ satisfies the Lipschitz condition at $\bar{\gamma}$: $|\xi^k_{m+1}-\xi^k_\ast|\le\theta |x_{m+1}-\bar{\gamma}^k|$ with $\xi^k_\ast:=\xi^k_{m(\bar{\gamma}^k)}+\frac{\xi^k_{m(\bar{\gamma}^k)+2}-\xi^k_{m(\bar{\gamma}^k)}}{2\dx}(\bar{\gamma}^k-x_{m(\bar{\gamma}^k)})$ for all $k,m$, then we have 
$$\sigma^k\le \frac{e^{4\theta (t_{l+1}-t_k)}}{4\theta\lambda}\dx.$$
\end{Prop}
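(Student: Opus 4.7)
The plan is to exploit the backward-martingale structure of the walk. For each $(x_{m+1},t_{k'})\in G$, a direct calculation from the transition probabilities $\bar\rho^{k'}_{m+1},\bar{\bar\rho}^{k'}_{m+1}$ shows that the conditional mean of the backward step $\gamma^{k'}-\gamma^{k'-1}$ given $\gamma^{k'}$ equals $\xi(\gamma^{k'},t_{k'})\dt$, and the conditional second moment equals $(\dx)^2$. Hence the quantities $Z^{k'}:=(\gamma^{k'}-\gamma^{k'-1})-\xi(\gamma^{k'},t_{k'})\dt$ are martingale differences with respect to the reverse-time filtration generated by $\gamma^{k'},\gamma^{k'+1},\ldots,\gamma^{l+1}$, and satisfy the pointwise bound $E_{\mu(\cdot;\xi)}[(Z^{k'})^2\,|\,\gamma^{k'}]=(\dx)^2-(\xi(\gamma^{k'},t_{k'}))^2(\dt)^2\le(\dx)^2$.

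For (1), Jensen's inequality immediately gives $(\tilde d^k)^2\le\tilde\sigma^k$. The definition \eqref{etaeta} together with the telescoping identity $\gamma^k-\gamma^{l+1}=-\sum_{k<k'\le l+1}(\gamma^{k'}-\gamma^{k'-1})$ yields $\gamma^k-\eta^k(\gamma)=-\sum_{k<k'\le l+1}Z^{k'}$. Pairwise $L^2(\mu)$-orthogonality of martingale differences then gives
\[
\tilde\sigma^k=\sum_{k<k'\le l+1}E_{\mu(\cdot;\xi)}[(Z^{k'})^2]\le (l+1-k)(\dx)^2=\frac{t_{l+1}-t_k}{\dt}(\dx)^2=\frac{t_{l+1}-t_k}{\lambda}\dx,
\]
since $\dt=\lambda\dx$, which proves (1).

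For (2), write $\delta^k:=\gamma^k-\bar\gamma^k$. Subtracting the mean from the backward recursion $\gamma^{k-1}=\gamma^k-\xi(\gamma^k,t_k)\dt-Z^k$ gives
\[
\delta^{k-1}=\delta^k-\bigl(\xi(\gamma^k,t_k)-E_{\mu(\cdot;\xi)}[\xi(\gamma^k,t_k)]\bigr)\dt-Z^k.
\]
Since $\gamma^k$ is itself a point of the odd grid, applying the hypothesized Lipschitz estimate at $x_{m+1}=\gamma^k$ gives $|\xi(\gamma^k,t_k)-\xi^k_\ast|\le\theta|\delta^k|$, and therefore by the triangle inequality $|\xi(\gamma^k,t_k)-E_{\mu(\cdot;\xi)}[\xi(\gamma^k,t_k)]|\le\theta|\delta^k|+\theta E_{\mu(\cdot;\xi)}[|\delta^k|]$. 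Using that $Z^k$ is orthogonal in $L^2(\mu)$ to every function of $\gamma^k$, expanding the square and bounding the cross-term by Cauchy--Schwarz together with the Jensen bound $(E_{\mu(\cdot;\xi)}[|\delta^k|])^2\le\sigma^k$ produces the one-step recursion
\[
\sigma^{k-1}\le\bigl(1+4\theta\dt+4\theta^2(\dt)^2\bigr)\sigma^k+(\dx)^2\le e^{4\theta\dt}\sigma^k+(\dx)^2.
\]
Iterating from the deterministic starting value $\sigma^{l+1}=0$ and summing the resulting geometric series of length $l+1-k=(t_{l+1}-t_k)/\dt$, with $e^{4\theta\dt}-1\ge4\theta\dt$ and $\dt=\lambda\dx$, yields the desired bound $\sigma^k\le\frac{e^{4\theta(t_{l+1}-t_k)}}{4\theta\lambda}\dx$.

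The only delicate step is the cross-term $-2\dt\cdot E_{\mu(\cdot;\xi)}[\delta^k(\xi(\gamma^k,t_k)-E_{\mu(\cdot;\xi)}[\xi(\gamma^k,t_k)])]$ that arises when expanding $|\delta^{k-1}|^2$: its sign is indefinite, so it cannot be absorbed by positivity and must be controlled via absolute-value bounds relying on the Lipschitz hypothesis. This is precisely where the factor $4$ in the exponent $e^{4\theta(t_{l+1}-t_k)}$ originates, and the choice of reference value $\xi^k_\ast$ is essential because it allows the Lipschitz bound to be applied uniformly about the probabilistic mean $\bar\gamma^k$ rather than about an arbitrary grid point. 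Once this cross-term is tamed, the remainder reduces to a routine discrete Gronwall iteration.
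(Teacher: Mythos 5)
Your proof is correct. The paper does not actually prove this proposition in the text---it is stated with the citation \cite{Soga1}---so there is no in-paper argument to compare against; nevertheless, the route you take is the natural one and all the steps check out. Viewing $Z^{k'}=(\gamma^{k'}-\gamma^{k'-1})-\xi(\gamma^{k'},t_{k'})\dt$ as martingale differences for the reverse-time filtration (this uses the Markov property of the walk: $E[Z^{k'}\mid\sigma(\gamma^{k'},\ldots,\gamma^{l+1})]=E[Z^{k'}\mid\gamma^{k'}]=0$), together with the conditional second-moment bound $E[(Z^{k'})^2\mid\gamma^{k'}]=\dx^2-\xi^2\dt^2\le\dx^2$ and pairwise $L^2$-orthogonality, yields part (1) immediately from $\gamma^k-\eta^k(\gamma)=-\sum_{k<k'\le l+1}Z^{k'}$ and $l+1-k=(t_{l+1}-t_k)/\dt$ with $\dt=\lambda\dx$. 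For part (2), the one-step recursion is set up correctly: the $Z^k$-cross term vanishes because $\delta^k-W^k$ is $\mathcal{F}_k$-measurable, the remaining terms give $\sigma^{k-1}\le\sigma^k+4\theta\dt\sigma^k+4\theta^2\dt^2\sigma^k+\dx^2=(1+2\theta\dt)^2\sigma^k+\dx^2\le e^{4\theta\dt}\sigma^k+\dx^2$ using $|\xi(\gamma^k,t_k)-E[\xi(\gamma^k,t_k)]|\le\theta|\delta^k|+\theta E[|\delta^k|]$ and Jensen's bound $(E[|\delta^k|])^2\le\sigma^k$, and the discrete Gronwall iteration from $\sigma^{l+1}=0$ with $e^{4\theta\dt}-1\ge4\theta\dt=4\theta\lambda\dx$ delivers the stated constant exactly. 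Your closing remark about where the factor $4$ originates and why $\xi^k_\ast$ must be anchored at $\bar\gamma^k$ is also an accurate reading of the structure.
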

\noindent If we take the hyperbolic scaling limit $\Delta=(\dx,\dt)\to0$ with 
$0<\lambda_0\le\lambda=\dt/\dx<\lambda_1$, then $(\tilde{d}^k)^2$ and $\tilde{\sigma}^k$ always tend to zero with $O(\dx)$, and so does $\sigma^k$ with the above Lipschitz condition. (A4) will be used to verify the Lipschitz condition.  

Let $\bv^k_{m+1}$ be a $\Z^2$-periodic difference solution of (\ref{HJ-Delta}) that is periodically extended on the whole of $\mathcal{G}_{odd}$. Then $\bv^k_{m+1}$ satisfies the following equality for each $n,l$ and $l'<l$  (our stochastic Lax-Oleinik type operator) \cite{Soga2}:
\begin{eqnarray}\label{value-Delta} 
\bv^{l+1}_{n}\hspace{-2mm}&=&\hspace{-2mm}\inf_{\xi} E_{\mu(\cdot;\xi)}\Big[\sum_{l'<k\le l+1}L^{(c)}(\gamma^k,t_{k-1},\xi^k_{m(\gamma^k)})\dt +\bv^{l'}_{m(\gamma^{l'})}\Big]
+h_\Delta(c )(t_{l+1}-t_{l'}),
\end{eqnarray}
where the infimum is taken over all controls bounded by $\lambda^{-1}$. 
We can find the unique minimizing control $\xi^\ast$. This satisfies with $\lambda_1$ in Proposition \ref{preliminary},
$$|\xi^\ast{}^{k+1}_m|\le\lambda_1^{-1}<\lambda^{-1}\mbox{ \,\,\, and \,\,\, $\xi^\ast{}^{k+1}_{m}=H_p(x_m,t_k,c+D_x \bv^k_{m+1})$}.$$
The equality (\ref{value-Delta}) is the key tool for our proof of Theorem \ref{main}.
\subsection{Construction of $\Z^2$-periodic solution}
We observe how to construct $\Z^2$-periodic entropy solutions and  viscosity solutions with the given position and number of singularities under the assumptions (A1) and (A2). This is necessary  in Section 4.  For  $k=0,\ldots,q-1$, define 
\begin{eqnarray*}
\Theta_{i,k}&:=&pr[\gamma^\ast_i(k),\gamma^\ast_{i+1}(k)],\quad i=1,\ldots,I-1,\\
 \Theta_{I,k}&:=&pr[\gamma^\ast_I(k),\gamma^\ast_{1}(k+1)],\mbox{ if $q>1$},\\
 \Theta_{I,0}&:=&pr[\gamma^\ast_I(0),\gamma^\ast_{1}(0)+1],\mbox{ if $q=1$}. 
\end{eqnarray*}
Note that $\cup_{i,k}\Theta_{i,k}=\T$. Let $C^+_{i,k}$ (resp. $C^-_{i,k}$) be the segment of the upper (resp. lower) separatrix restricted to $\Theta_{i,k}\times\{t=0\}$. For each $i$, choose arbitrarily either $\{ C^+_{i,0},\ldots,C^+_{i,q-1}\}$ or $\{ C^-_{i,0},\ldots,C^-_{i,q-1}\}$. Then, our choice yields  a curve $C=\{(x,U_0(x))\,|\,x\in\T\}$ that consists of $C^+_{i,k}$ or $C^-_{i,k}$ on each $\Theta_{i,k}$. Let $c:=\int_0^1U_0(x)dx$ and let $u_0:=U_0-c$. Then, the solution of (\ref{CLi}) with this $c$ and $u_0$ is a $\Z^2$-periodic entropy solution without any shock.  

Suppose that $\{ C^+_{i,0},\ldots,C^+_{i,q-1}\}$ was chosen in the above $C$ for some $i$. Fix arbitrarily $k_0\in\{0,\ldots,q-1\}$ and take $y_0\in \Theta_{i,k_0}$. Let $\tilde{C}=\{(x,\tilde{U}_0(x))\,|\,x\in\T\}$ be the curve such that $\tilde{C}=C$ on $\T\setminus \Theta_{i,k_0}$ and $\tilde{C}$ switches from $C^+_{i,k_0}$ to $C^-_{i,k_0}$ at $y_0$ as $x$ increases in $\Theta_{i,k_0}$.   Let $\tilde{c}:=\int_0^1\tilde{U}_0(x)dx$ and let $\tilde{u}_0:=\tilde{U}_0-\tilde{c}$, where $\Delta c:=c-\tilde{c}\to0$ as $y_0$ tends to the right boundary of $\Theta_{i,k_0}$. Then the solution  $\tilde{u}$ of (\ref{CLi})$|_{u_0=\tilde{u}_0,c=\tilde{c}}$ is a $q$-periodic entropy solution with a single shock. In fact,  $\tilde{u}(\cdot,1)$ has only one jump at $y_1\in \Theta_{i,k_0+1}$; $\tilde{u}(\cdot,2)$ has only one jump at $y_2\in \Theta_{i,k_0+2}$; $\ldots$ ; $\tilde{u}(\cdot,q)$ has only one jump at $y_q\in \Theta_{i,k_0+q}=\Theta_{i,k_0}$; $y_q=y_0$ due to the conservation law $\int_0^1\{\tilde{c}+\tilde{u}(x,t)\}dx\equiv \tilde{c}$ for all $t$.     

With the above notation, consider the curve $\bar{C}=\{(x,\bar{U}_0(x))\,|\,x\in\T\}$ such that $\bar{C}=C$ on $\T\setminus \cup_{0\le k<q}\Theta_{i,k}$ and $\bar{C}$ switches from $C^+_{i,k_0+k}$ to $C^-_{i,k_0+k}$ at $y_{k}$ as $x$ increases in $\Theta_{i,k_0+k}$ for $k=0,\ldots,q-1$, where $k_0+k$ is replaced by $k_0+k-q$, if $k_0+k\ge q$. Let $\bar{c}:=\int_0^1\bar{U}_0(x)dx=c-q\Delta c$ and let $\bar{u}_0:=\bar{U}_0-\bar{c}$. Then, the solution  $\bar{u}$ of (\ref{CLi})$|_{u_0=\bar{u}_0,c=\bar{c}}$ is a $1$-periodic entropy solution, i.e., a $\Z^2$-periodic entropy solution.   

In this way, we can construct a $\Z^2$-periodic entropy solution  for each $c$, choosing the position and number of shocks, and therefore the corresponding viscosity solution via (\ref{repre}).  
\setcounter{section}{3}
\setcounter{equation}{0}
\section{Proof of result}
First, we briefly state our strategy. The key point is that our discretization scheme is of the first order, namely, for a $C^2$-solution $v$ of (\ref{HJ}) such that $|H_p(z,t,c+v_x(x,t))|\le\lambda_1^{-1}<\lambda^{-1}$ and for $v^k_{m+1}:=v(x_{m+1},t_k)$ defined on $\mathcal{G}_{odd}$, we have  
\begin{eqnarray*}
&&D_tv^{k+1}_m+H(x_m,t_k,c+D_xv^k_{m+1})=h(c )-\frac{\dx}{2\lambda}A(x_m,t_k)+\varepsilon(x_m,t_k),\\
&&A(x,t)=v_{xx}(x,t)-\lambda^2 v_{tt}(x,t),\,\,\,\varepsilon(x_m,t_k)=o(\dx).
\end{eqnarray*}
In fact, this follows from the following Taylor expansions around $(x_m,t_k)$ 
\begin{eqnarray*}
D_tv^{k+1}_m=v_t(x_m,t_k)-\frac{\dx}{2\lambda}A(x_m,t_k)+o(\dx),\,\,\,
D_xv^k_{m+1}=v_x(x_m,t_k)+o(\dx).
\end{eqnarray*}
It follows from a similar reasoning to obtain (\ref{value-Delta}) (see Proposition 2.2 of \cite{Soga2}) that $v^k_{m+1}$ satisfies for any $n,l,l'<l$,
\begin{eqnarray}\label{value-Delta2}
v^{l+1}_n&=&\inf_{\xi}E_{\mu(\cdot;\xi)}\Big[\sum_{l'<k\le l+1}\{L^{(c)}(\gamma^k,t_{k-1},\xi^k_{m(\gamma^k)})-\frac{\dx}{2\lambda}A(\gamma^k,t_{k-1})\\\nonumber
&&\qquad\qquad\qquad\qquad\qquad+\varepsilon(\gamma^k,t_{k-1}) \}\dt +v^{l'}_{m(\gamma^{l'})}\Big]+h(c)(t_{l+1}-t_{l'}).
\end{eqnarray}
By comparing (\ref{value-Delta}) and (\ref{value-Delta2}) near $\gamma^\ast_i$,  we obtain upper and lower estimates for $h(c )-h_\Delta(c )$ in terms of $A$ and so on. Since the law of large numbers holds for the minimizing random walk, where it tends to $\gamma^\ast_i$, we obtain $\Gamma^i$ through $A$ in the estimates. This leads to our criterion. Of course $\Z^2$-periodic viscosity solutions $\bv$ are only Lipschitz, and our argument is more complicated with additional terms. In the estimates for $h(c )-h_\Delta(c )$, the term $\dx\Gamma^i$ appears as well as other terms. In order to keep $\dx\Gamma^i$ as the principle term in the estimates, we need sharper estimates for the others with the aid of Proposition \ref{eta}. The main difficulty is that Chebychev's inequality in our random walks yields estimates of the oder $O(\dx)$ at best -- the same order as $\dx\Gamma^i$. We will refer to this point in the proof.          

We say that a $\Z^2$-periodic entropy solution of $\bar{u}^{(c )}$ or $\Z^2$-periodic viscosity solution $\bvc$ of (\ref{HJ}) has ``transition points'' at $\gamma^\ast_i$, if 
graph$(c+\bu^{(c )})$ or graph$(c+\bvc_x)$ contains $pr\mathcal{U}^i_{loc}(\delta)$ for small $\delta>0$ (``transition'' means that the graph continuously switches from the lower separatrix to the upper one along $\gamma^\ast_i$). The following fact is used in \cite{Bessi}, but for the reader's convenience we repeat it with a proof.
\begin{Prop}\label{peierls}
Let $c\in(c_0,c_1)$, $i\in\{ 1,2,\ldots,I\}$, and $\bv^{(c )}$ be a $\Z^2$-periodic viscosity solution of (\ref{HJ}) with transition points only at $\gamma^\ast_i$. Then, such a $\bvc$ is unique (up to constants) and coincides with $h^{(c )}_p(\gamma^\ast_i(0),0;\cdot,\cdot)$. 
\end{Prop}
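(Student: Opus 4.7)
Set $z_i := (\gamma^\ast_i(0), 0)$. The strategy is to establish the pointwise identity $\bvc(y,\tau) - \bvc(z_i) = h^{(c)}_p(z_i; y, \tau)$ on $\T^2$, which immediately yields both the explicit formula and uniqueness up to the additive constant $\bvc(z_i)$. For the upper bound, lift an admissible competitor $\kappa:[0,\tau+T]\to\R$ for the Peierls barrier (with $T\in q\N$, $\kappa(0)=\gamma^\ast_i(0)+m_1$, $\kappa(\tau+T)=y+m_2$) and iterate the deterministic Lax--Oleinik formula (\ref{value}) on unit-time strips; combining with $\Z^2$-periodicity of $\bvc$ gives
\[
\bvc(y,\tau) - \bvc(z_i) \le \int_0^{\tau+T}\bigl[L^{(c)}(\kappa,s,\kappa'(s)) + h(c)\bigr]ds.
\]
Taking $\inf_\kappa$ and $\liminf_{T\to\infty}$ yields $\bvc(y,\tau) - \bvc(z_i) \le h^{(c)}_p(z_i; y, \tau)$.

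For the lower bound, fix a differentiability point $(y,\tau)$ of $\bvc$; the optimal backward characteristic $\gamma^\ast:(-\infty,\tau]\to\R$ exists and is unique, and its $\alpha$-limit in $\T^2$ is a single periodic orbit $pr(\gamma^\ast_{j})$ by compactness, connectedness, and the finite-disjoint structure of $\mathcal{M}^{(c)}$. Choose $N^\ast\in\Z$ such that $\gamma^\ast(s) - \gamma^\ast_{j}(s) - N^\ast \to 0$ as $s\to-\infty$, and for $T_n := qn\to\infty$ construct a competitor $\kappa_n:[0,\tau+T_n]\to\R$ for $h^{(c)}_p(z_{j}; y, \tau)$ by a linear bridge on $[0,1]$ from $\gamma^\ast_{j}(0)$ to $\gamma^\ast(1-T_n)+pn-N^\ast$, followed by the shifted tail $s\mapsto \gamma^\ast(s-T_n)+pn-N^\ast$ on $[1,\tau+T_n]$. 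As $n\to\infty$ the bridge action converges to $\int_0^1[L^{(c)}(\gamma^\ast_j,s,(\gamma^\ast_j)') + h(c)]ds = \bvc(\gamma^\ast_j(1),1) - \bvc(z_j)$ (the integrand averages to zero along any Aubry orbit because $h(c)$ is the Mather $\alpha$-value, and Lax--Oleinik equality holds along $\gamma^\ast_j$), while the tail action equals $\bvc(y,\tau) - \bvc(\gamma^\ast_j(1),1)$ by Lax--Oleinik equality along $\gamma^\ast$ together with $q$-periodicity. Summing and combining with the upper bound applied at $z_j$ produces
\[
\bvc(y,\tau) - \bvc(z_{j(y,\tau)}) = h^{(c)}_p(z_{j(y,\tau)}; y, \tau).
\]

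To conclude it suffices to prove $\bvc(z_j) - \bvc(z_i) = h^{(c)}_p(z_i; z_j)$ for every $j$; substituting into the identity above and invoking the Peierls triangle inequality $h^{(c)}_p(z_i; y, \tau) \le h^{(c)}_p(z_i; z_j) + h^{(c)}_p(z_j; y, \tau)$ together with the upper bound then forces equality and yields $\bvc(y,\tau) = \bvc(z_i) + h^{(c)}_p(z_i; y, \tau)$. For $j = i$ both sides vanish. For $j\ne i$, the ``no transition at $\gamma^\ast_j$'' hypothesis implies that on at least one side of $pr(\gamma^\ast_j)$ the graph of $c+\bvc_x$ is not composed of the local unstable manifold of $\gamma^\ast_j$, so backward characteristics from approach points $(y_n,\tau_n)\to z_j$ on that side have $\alpha$-limit $pr(\gamma^\ast_{k_j})$ with $k_j\ne j$; passing $n\to\infty$ in the preceding identity gives $\bvc(z_j) = \bvc(z_{k_j}) + h^{(c)}_p(z_{k_j}; z_j)$. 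Iterating produces a chain $j\to k_j\to k_{k_j}\to\cdots$ which cannot close into a cycle without violating strict positivity of the Mather semi-distance $h^{(c)}_p(z;z') + h^{(c)}_p(z';z) > 0$ between distinct static classes, so it must eventually reach $i$; telescoping along the chain and combining with the triangle inequality and the upper bound yields the required equality. Extending the resulting identity $\bvc = \bvc(z_i) + h^{(c)}_p(z_i;\cdot)$ from the full-measure set of differentiability points to all of $\T^2$ by Lipschitz continuity of both sides completes the proof. The principal obstacle is the final combinatorial no-cycle step, which requires a delicate translation of the local ``no transition'' hypothesis into the one-sided $\alpha$-limit behavior of backward characteristics at each $z_j$ with $j\ne i$.
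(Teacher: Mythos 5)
Your proposal follows essentially the same approach as the paper — the deterministic Lax--Oleinik formula (\ref{value}), the structure of backward characteristics lying on the projected stable/unstable manifolds and converging to the $\gamma^\ast_j$'s, and competitor paths for the Peierls barrier built from heteroclinic pieces with bridges. The paper organizes this as (a) a uniqueness comparison of two solutions with transitions only at $\gamma^\ast_i$ via the heteroclinic chain expansion (\ref{kkk}), followed by (b) identification with $h^{(c)}_p(\gamma^\ast_i(0),0;\cdot,\cdot)$ via upper and lower bounds; you fold both into a single pointwise identity and telescope along the chain at the level of the boundary values $\bvc(z_j)$. That reorganization is fine in principle.

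There is, however, a concrete error in your lower-bound construction. You take a \emph{linear} bridge on $[0,1]$ from $\gamma^\ast_j(0)$ to $\gamma^\ast(1-T_n)+pn-N^\ast$ and assert its action converges to $\int_0^1[L^{(c)}(\gamma^\ast_j,s,(\gamma^\ast_j)')+h(c)]\,ds$. The endpoint does converge to $\gamma^\ast_j(1)$, but the straight segment from $\gamma^\ast_j(0)$ to $\gamma^\ast_j(1)$ over unit time has constant velocity $p/q$, not $(\gamma^\ast_j)'(s)$, and since $\gamma^\ast_j$ is an action minimizer its action on $[0,1]$ is \emph{strictly less} than that of the linear segment (unless $\gamma^\ast_j$ happens to be affine). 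So your total action carries a positive gap, and you only obtain $h^{(c)}_p(z_j;y,\tau)\le\bvc(y,\tau)-\bvc(z_j)+\text{gap}$, which does not close. The fix is what the paper actually does: follow the genuine curve $\gamma^\ast_j$ on $[0,1-\varepsilon]$ and insert a short linear correction on $[1-\varepsilon,1]$ (the paper's $\tilde J_j$ bridges of length $2\varepsilon$), then send $\varepsilon\to0+$ after $n\to\infty$ so the correction contributes $o(1)$ to the action. A second, softer concern is that your no-cycle step invokes $h^{(c)}_p(z;z')+h^{(c)}_p(z';z)>0$ for distinct orbits as a known fact; under (A1)--(A2) this does hold because the $\gamma^\ast_j$ lie in distinct static classes, but it is not stated in the paper and would need a line of justification. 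The paper sidesteps this by asserting the finite heteroclinic chain connecting $y$ to $\gamma^\ast_i(0)$ directly from the separatrix structure of $\mathrm{graph}(c+\bvc_x)$ and then expanding $\bvc(y,\tau)$ along it, which bypasses any explicit cycle-exclusion argument.
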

\begin{proof}
Let $\bv=\bv^{(c )},\hat{v}=\hat{v}^{(c )}$ be $\Z^2$-periodic viscosity solutions of (\ref{HJ}) with transition points only at $\gamma^\ast_i$. Adding a constant if necessary, we have 
\begin{eqnarray}\label{adjust}
\bv(\gamma^\ast_i(0),0)=\hat{v}(\gamma^\ast_i(0),0)=0.
\end{eqnarray} 
We will prove $\bv=\hat{v}$. 
Take any $(y,\tau)\in \T^2$. Since $\bv$ has transition points only at $\gamma^\ast_i$ and every minimizing curve yields a trajectory on the projected stable/unstable manifolds, we have the following two cases and in both of the cases we obtain $\bv(y,\tau)-\hat{v}(y,\tau)\ge 0$: Let $a\in\Z$ and let $\gamma_0:(-\infty,\tau]\to\R$ be a minimizing curve for $\bv(y,\tau)$.
\medskip

(i) $\{pr(\gamma_0(s),s, c+\bv_x(\gamma_0(s),s))\}_{s<\tau}$ is on the projected unstable manifold of $\gamma^\ast_i$. Then,  there exists $b\in\Z$, $|b|< q$ for which $\gamma_0$ satisfies 
$$\mbox{$pr\gamma_0(aq+b)\to\gamma^\ast_i(0)$ as $a\to-\infty$.}$$
Hence, it follows from (\ref{value}) that $\bv(y,\tau)-\hat{v}(y,\tau)\ge \bv(\gamma_0(aq+b),aq+b)-\hat{v}(\gamma_0(aq+b),aq+b)$ for any $a$. It follows from (\ref{adjust}) that $\bv(\gamma_0(aq+b),aq+b)-\hat{v}(\gamma_0(aq+b),aq+b)\to0$ as $a\to-\infty$. Therefore, we obtain $\bv(y,\tau)-\hat{v}(y,\tau)\ge0$.
\medskip

(ii) Otherwise. Then there exist $x_1, \ldots, x_k\in\{pr\gamma^\ast_j(t)\,|\,t=0,\ldots,q-1, \,\,j\neq i \}$, $k\ge1$ for which we have the heteroclinic chain connecting $y$ and $\gamma^\ast_i(0)$ that consists of the following curves:   
\begin{eqnarray}\nonumber
 \gamma_0&:&(-\infty,\tau]\to\R,\mbox{ the minimizer for $\bv(y,\tau)$, $pr\gamma_0(aq)\to x_1$ as $a\to-\infty$;}\\\nonumber
 \gamma_1&:&\R\to\R,\mbox{ $pr\gamma_1(aq)\to x_2$ (resp. $x_1$) as $a\to-\infty$ (resp. $+\infty$);}\\\label{b}
 &&\qquad\qquad\qquad\qquad \vdots\\\nonumber
 \gamma_{k-1}&:&\R\to\R,\mbox{ $pr\gamma_{k-1}(aq)\to x_k$ (resp. $x_{k-1}$) as $a\to-\infty$ (resp. $+\infty$);}\\\nonumber
 \gamma_k&:&\R\to\R,\mbox{ $pr\gamma_k(aq+b)\to \gamma^\ast_i(0)$ as $a\to-\infty$ with $b\in\Z$, $|b|< q$,}\\\nonumber
 &&\quad\quad\,\,\,\,\,\,\,\,\,\,\, \mbox{$pr\gamma_k(aq)\to x_{k}$ as $a\to+\infty$},
\end{eqnarray}    
with 
$$\mbox{$\gamma'_j(s)=H_p(\gamma_j(s),t,c+\bv_x(\gamma_j(s),s))$ \,\,\,for $j=0,\ldots,k$,}$$
which means that restrictions of $\gamma_0,\ldots,\gamma_k$ within any finite time interval are minimizing curves for $\bv$. Hence, we have 
\begin{eqnarray}\nonumber
 \bv(y,\tau)&=&\int^\tau_{aq}\{L^{(c )}(\gamma_0(s),s,\gamma_0'(s))+h(c)\}ds +\bv(\gamma_0(aq),aq)\\\nonumber
 &=& \lim_{a\to-\infty}\int^\tau_{aq}\{L^{(c )}(\gamma_0(s),s,\gamma_0'(s))+h(c)\}ds +\bv(x_1,0),\\\nonumber
\bv(x_1,0)&=&\lim_{a\to+\infty}\bv(\gamma_1(aq),aq)  \\\nonumber
&=&\lim_{a\to+\infty}\left[\int^{aq}_{-aq}\{L^{(c )}(\gamma_1(s),s,\gamma_1'(s))+h(c)\}ds +\bv(\gamma_1(-aq),-aq)\right]\\\nonumber
&=& \lim_{a\to+\infty}\int^{aq}_{-aq}\{L^{(c )}(\gamma_1(s),s,\gamma_1'(s))+h(c)\}ds +\bv(x_2,0),\\\nonumber
&\vdots&\\\nonumber
\bv(x_k,0)&=&\lim_{a\to+\infty}\bv(\gamma_k(aq),aq)\\\nonumber
&=&\lim_{a\to+\infty}\left[\int^{aq}_{-aq+b}\{L^{(c )}(\gamma_k(s),s,\gamma_k'(s))+h(c)\}ds +\bv(\gamma_k(-aq+b),-aq+b)\right]\\\nonumber
&=&\lim_{a\to+\infty}\int^{aq}_{-aq+b}\{L^{(c )}(\gamma_k(s),s,\gamma_k'(s))+h(c)\}ds +\bv(\gamma^\ast_i(0),0).
\end{eqnarray}
Therefore, we obtain
\begin{eqnarray}\label{kkk}
\bv(y,\tau)&=&\lim_{a\to-\infty}\int^\tau_{aq}\{L^{(c )}(\gamma_0(s),s,\gamma_0'(s))+h(c)\}ds\\ \nonumber
&&+\sum_{j=1}^{k-1} \lim_{a\to+\infty}\int^{aq}_{-aq}\{L^{(c )}(\gamma_j(s),s,\gamma_j'(s))+h(c)\}ds  \\\nonumber
&&+\lim_{a\to+\infty}\int^{aq}_{-aq+b}\{L^{(c )}(\gamma_k(s),s,\gamma_k'(s))+h(c)\}ds +\bv(\gamma^\ast_i(0),0).
\end{eqnarray}
On the other hand, the variational property of $\hat{v}$ implies  
\begin{eqnarray*}
\hat{v}(y,\tau)&\le&\lim_{a\to-\infty}\int^\tau_{aq}\{L^{(c )}(\gamma_0(s),s,\gamma_0'(s))+h(c)\}ds\\ 
&&+\sum_{j=1}^{k-1} \lim_{a\to+\infty}\int^{aq}_{-aq}\{L^{(c )}(\gamma_j(s),s,\gamma_j'(s))+h(c)\}ds  \\
&&+\lim_{a\to+\infty}\int^{aq}_{-aq+b}\{L^{(c )}(\gamma_k(s),s,\gamma_k'(s))+h(c)\}ds +\hat{v}(\gamma^\ast_i(0),0).
\end{eqnarray*}
Thus, we conclude that $\bv(y,\tau)-\hat{v}(y,\tau)\ge \bv(\gamma^\ast_i(0),0)-\hat{v}(\gamma^\ast_i(0),0)=0$.

Since $\hat{v}$ has transition points only at $\gamma^\ast_i$, we have the same argument, obtaining $\bv(y,\tau)-\hat{v}(y,\tau)\le 0$ and $\bv=\hat{v}$. 

Next, we see that $\bv(y,\tau)=h^{(c )}_p(\gamma^\ast_i(0),0;y,\tau)$. Note that 
\begin{eqnarray}\label{Peierle}
h^{(c )}_p(\gamma^\ast_i(0),0;y,\tau)=\liminf_{T\in\N,T\to\infty}\left[ \inf\int^\tau_{-T}\{L^{(c )}(\gamma(s),s,\gamma'(s))+h(c )\}ds \right],
\end{eqnarray}
where the infimum is taken over all absolutely continuous curves $\gamma:[-T,\tau]\to\R$ with $\gamma(\tau)=y$ and $pr\gamma(-T)=\gamma^\ast_i(0)$. Let $\gamma_T:[-T,\tau]\to\R$ be a minimizing curve for the infimum in (\ref{Peierle}). Then, we have for any $T\in\N$,
\begin{eqnarray*}
\bv(y,\tau)&\le& \int^\tau_{-T}\{L^{(c )}(\gamma_T(s),s,\gamma_T'(s))+h(c )\}ds+\bv(\gamma^\ast_i(0),0)\\
&=&\int^\tau_{-T}\{L^{(c )}(\gamma_T(s),s,\gamma_T'(s))+h(c )\}ds.
\end{eqnarray*}
Therefore, we obtain $\bv(y,\tau)\le h^{(c )}_p(\gamma^\ast_i(0),0;y,\tau)$. 

Define $\gamma:[-(2k+1)aq+b,\tau]\to\R$ with $a\in\N$, $\varepsilon>0$, $\gamma_0,\ldots,\gamma_k$ and $b$ in (\ref{b}) as
\[
  \gamma(s) := \left\{ \begin{array}{ll}
    \gamma_0(s) \mbox{\,\,\,\,\,\, for }s\in J_0=[-aq+\varepsilon,\tau], & \\\\
    
    \mbox{linear line connecting $\gamma_0(-aq+\varepsilon)$ and $\gamma_1(aq-\varepsilon)$} & \\
    \mbox{ \qquad\qquad\qquad\quad\quad\,\,\,\,\, for }s\in\tilde{J}_0=[-aq-\varepsilon,-aq+\varepsilon],&\\\\
    
    \gamma_1(s+2aq) \mbox{\,\,\,\,\,\, for }s\in J_1=[-3aq+\varepsilon,-aq-\varepsilon], & \\\\
    
    \mbox{linear line connecting $\gamma_1(-aq+\varepsilon)$ and $\gamma_2(aq-\varepsilon)$,} & \\
     \mbox{ \qquad\qquad\qquad\quad\quad\,\,\,\,\, for }s\in\tilde{J}_1=[-3aq-\varepsilon,-3aq+\varepsilon],&\\
    
  \qquad\qquad\qquad\qquad\qquad  \vdots&\\
    
    \gamma_k(s+2kaq) \mbox{\,\,\,\,\,\, for }s\in J_k=[-(2k+1)aq+b+\varepsilon,-(2k-1)aq-\varepsilon], &\\\\
    \mbox{linear line connecting $\gamma_k(-aq+b+\varepsilon)$
     and $\gamma^\ast_i(-aq)$} &\\
      \mbox{ \qquad\qquad\qquad\qquad\quad\,\,\,\,\, for } s\in\tilde{J}_k=[-(2k+1)aq+b,-(2k+1)aq+b+\varepsilon].&
  
\end{array} \right.
\]
Then, we have
\begin{eqnarray*}
h^{(c )}_p(\gamma^\ast_i(0),0;y,\tau)&\le&\liminf_{a\in\N,a\to+\infty} \int^\tau_{-(2k+1)aq+b}\{L^{(c )}(\gamma(s),s,\gamma'(s))+h(c )\}ds\\
&=&\liminf_{a\in\N,a\to+\infty}\Big[\sum_{j=0}^k\int_{J_j}\{L^{(c )}(\gamma(s),s,\gamma'(s))+h(c )\}ds\\
&&+\sum_{j=0}^k\int_{\tilde{J}_j}\{L^{(c )}(\gamma(s),s,\gamma'(s))+h(c )\}ds\Big].
\end{eqnarray*}
We can take $\varepsilon\to0+$ according to $a\to+\infty$ so that the term on the third line tends to zero. The term on the second line tends to the right-hand side of (\ref{kkk}) as $\varepsilon\to 0+$. Thus, we conclude that $h^{(c )}_p(\gamma^\ast_i(0),0;y,\tau)\le\bv(y,\tau)$, obtaining $\bv(y,\tau)=h^{(c )}_p(\gamma^\ast_i(0),0;y,\tau)$.  \end{proof}

The following proposition is a key fact: 
\begin{Prop}\label{key}
Let $c\in(c_0,c_1)$. If the limit $\bvc$ of a convergent subsequence $\{\bv_\Delta^{(c )}\}$ has transition points at $\gamma_i^\ast$ and $\gamma_j^\ast$, then $\Gamma^i=\Gamma^j$. 
\end{Prop}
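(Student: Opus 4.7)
My plan is to derive, for each of $k = i$ and $k = j$, the asymptotic identity
\begin{equation*}
h(c) - h_\Delta(c) = \frac{\dx}{2\lambda q}\,\Gamma^k + o(\dx) \as \Delta \to 0
\end{equation*}
along the given convergent subsequence, and then to subtract the two to obtain $\Gamma^i = \Gamma^j$. The vehicle, as indicated in the opening of Section 4, is to play the stochastic Lax-Oleinik identity (\ref{value-Delta}) for $\bvc_\Delta$ off against its $C^2$-analogue (\ref{value-Delta2}) applied with $v = \zeta^i$, the local generating function of $\mathcal{U}^i_{loc}(\delta)$.

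The first step is to fix a grid point $(x_n, t_{l+1}) \in \mathcal{G}_{odd}$ within one mesh of $\gamma^\ast_i(0)$ and a large $Q \in \N$, and apply both operators over a time window of length $T = Qq$. Using the minimizing control $\xi^i$ of (\ref{value-Delta2}) for $\zeta^i$ as a test control in (\ref{value-Delta}), and subtracting the equality for $\zeta^i$, yields
\begin{multline*}
\bvc_\Delta(x_n, t_{l+1}) - \zeta^i(x_n, t_{l+1}) \le E_{\mu(\cdot;\xi^i)}\bigl[\bvc_\Delta(\gamma^{l'}, t_{l'}) - \zeta^i(\gamma^{l'}, t_{l'})\bigr] \\
+ (h_\Delta(c) - h(c))\,T + \frac{\dx}{2\lambda}\, E_{\mu(\cdot;\xi^i)}\Bigl[\sum_k A^i(\gamma^k, t_{k-1})\,\dt\Bigr] + o(\dx)\,T,
\end{multline*}
with $A^i := \zeta^i_{xx} - \lambda^2 \zeta^i_{tt}$. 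Exchanging the roles of $\xi^i$ and the minimizer $\xi^\Delta$ of (\ref{value-Delta}) for $\bvc_\Delta$ produces the companion inequality in the reverse direction.

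The second step is to show that, for both minimizing controls, $E_{\mu(\cdot;\xi)}\bigl[\sum_k A^i(\gamma^k, t_{k-1})\,\dt\bigr] = Q\Gamma^i + o(T\dx)$, and that the boundary terms $E[\bvc_\Delta - \zeta^i]$ at both endpoints remain uniformly bounded in $T$ and $\Delta$. The latter follows because graph$(c+\bvc_x) \supset pr\,\mathcal{U}^i_{loc}(\delta)$ forces $\bvc - \zeta^i$ to be locally a constant inside the projected unstable tube, together with $\bvc_\Delta \to \bvc$ uniformly. For the former, the random walks must concentrate on $\gamma^\ast_i$: the drift of $\xi^\Delta$ equals $H_p(\cdot,\cdot, c + D_x \bvc_\Delta)$ and assumption (A4) supplies exactly the Lipschitz hypothesis of Proposition \ref{eta}(2), giving variance $\sigma^k = O(\dx)$ per step; the drift of $\xi^i$ equals $H_p(\cdot,\cdot, c + \zeta^i_x)$ and the $C^2$-regularity of $\zeta^i$ combined with the backward hyperbolicity of $\gamma^\ast_i$ along its unstable manifold produces the analogous bound, with the walks confined to $U^i(\delta)$ with high probability. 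Dividing the two-sided estimate by $T$, letting $Q \to \infty$ with $\Delta$ fixed and then $\Delta \to 0$ along a diagonal with $T = T(\Delta) \to \infty$ so that $1/T = o(\dx)$, yields the claimed identity; writing it at $i$ and $j$ and subtracting gives $\tfrac{\dx}{2\lambda q}(\Gamma^i - \Gamma^j) = o(\dx)$, hence $\Gamma^i = \Gamma^j$.

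The hard part is the sharpness of the law of large numbers: as the authors explicitly flag in the section's opening, a direct Chebychev estimate gives an LLN error of order $O(T\dx)$, which after division by $T$ is the same size $O(\dx)$ as the principal term $\frac{\dx}{2\lambda q}\Gamma^i$. To push this to $o(\dx)$ one needs to combine the exponential contraction of the mean $\bar\gamma^k$ onto $\gamma^\ast_i$ coming from backward hyperbolicity (so that the Riemann sum $\sum_k A^i(\bar\gamma^k, t_{k-1})\,\dt$ coincides with $\int_0^T A^i(\gamma^\ast_i(s), s)\,ds$ up to a time-independent error), the symmetric second-moment cancellation in the Taylor expansion of $A^i$ about $\bar\gamma^k$ (rendering the $O(\sigma^k)$ contribution summable in an averaged sense), and careful control of the $O(\dt) = O(\dx)$ Riemann discretization error through a judicious choice of $T = T(\Delta)$. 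This rate estimate is the technical heart of the argument and the precise point where assumption (A4), acting through Proposition \ref{eta}(2), is essential.
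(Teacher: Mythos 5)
Your high-level strategy---play (\ref{value-Delta}) off against its $C^2$-analogue (\ref{value-Delta2}), extract $\Gamma^i$ from the $A$-term, and compare the result at $i$ and $j$---is the right one and matches the paper. But your asymptotic scheme fails at a concrete point, and you are also missing the device that controls the boundary term.

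The proposal hinges on deriving $h(c)-h_\Delta(c) = \frac{\dx}{2\lambda q}\Gamma^k + o(\dx)$ by dividing a two-sided inequality by the time horizon $T=Qq$ and then sending $T=T(\Delta)\to\infty$ so fast that $1/T = o(\dx)$. This is incompatible with the only available concentration estimate: Proposition~\ref{eta}(2) gives $\sigma^k \le \frac{e^{4\theta(t_{l+1}-t_k)}}{4\theta\lambda}\dx$, so the per-step variance grows \emph{exponentially} in the time horizon. To keep $\sigma^k\dx^{-1}$ bounded you can afford at most $T = O(\log(1/\dx))$, so $1/T$ is at least of order $1/\log(1/\dx)$, which is far larger than $\dx$. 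Thus the boundary contributions, which you only claim are $O(1)$ uniformly in $T$ and $\Delta$, do \emph{not} become $o(\dx)$ after division by $T$; they dominate the principal term $\frac{\dx}{2\lambda q}\Gamma^i$. The same obstruction kills the LLN-rate improvements you invoke: Chebychev with exploding variance gives nothing as $T\to\infty$. The ``second-moment cancellation'' you mention would require an argument the paper explicitly says it does not have---it remarks that improving Chebychev here is hard because the walk is not a sum of i.i.d.\ variables.

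The paper sidesteps the need for $o(\dx)$ precision entirely. It keeps $T$ \emph{fixed} (just large enough that hyperbolic contraction gives $e^{-b_1 qT}\le 1/4$, as in (\ref{decay})), and introduces a second small parameter $\delta$, the width of the tube $U^i(\delta)$ in which the $C^2$-extension $v$ agrees with $\bv$. The comparison is not with $\zeta^i$ globally (which is only defined locally) but with a grafted grid function $w^k_{m+1}$ equal to $\bv^k_{m+1}$ on $\tilde U^i(\delta)$ and to a translated copy of $v^k_{m+1}$ outside; this makes the forcing $f$ vanish inside the tube, paralleling $g=0$, and is what permits the $|f-g|<\epsilon_2(\Delta)$ estimate. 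The boundary term is then controlled not by sending $T\to\infty$, but by choosing the starting index $n$ to \emph{maximize} (resp.\ \emph{minimize}) $w^0_{m+1}-v^0_{m+1}$ over $\tilde U^i(\delta/3)|_{k=0}$ and using (\ref{decay}) plus Chebychev at scale $\alpha=\delta/6$ to confine the walk's endpoint to a neighborhood where the maximality of $n$ makes the boundary difference nonpositive (resp.\ nonnegative). This yields the two-sided inequality
$T\Gamma^i\frac{\dx}{2\lambda}-\nu(\delta)\frac{\dx}{2\lambda}-\frac{\epsilon(\Delta)}{\delta^2}\dx \le (h(c)-h_\Delta(c))qT \le T\Gamma^i\frac{\dx}{2\lambda}+\nu(\delta)\frac{\dx}{2\lambda}+\frac{\epsilon(\Delta)}{\delta^2}\dx$,
and the same at $j$, whereupon one first lets $\Delta\to 0$ (for fixed $\delta$) and then $\delta\to 0$ to get $\Gamma^i=\Gamma^j$. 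Your proposal has no analogue of the maximizing choice of $n$ and no second parameter $\delta$; these are not cosmetic but are what make the argument close.
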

\begin{proof}
Set $\bv=\bvc$ and $\bv_\Delta=\bvc_\Delta$. There exists $\delta>0$ for which we have graph$(c+\bv_x)\supset pr\mathcal{U}^i_{loc}(\delta),pr\mathcal{U}^j_{loc}(\delta)$ and $\bv$ is smooth on $U^i(\delta)\cup U^j(\delta)$ including its boundary. Throughout this paper, $\epsilon_1(\Delta),\epsilon_2(\Delta),\ldots$ are numbers that tend to $0$ as $\Delta\to0$, and $b_1,b_2,\ldots$ are constants. Since $(\bv_\Delta)_x\to\bv_x$ a.e. as $\Delta\to0$,  we have  with small $\epsilon>0$,
\begin{eqnarray}\label{estimate}
\sup_{\R^2}|\bv_\Delta-\bv|<\epsilon_1(\Delta)\,\,\,\,\,\mbox{ and }\,\,\,\,\, \sup_{U^i(\delta+\epsilon)\cup U^j(\delta+\epsilon)}|(\bv_\Delta)_x-\bv_x|<\epsilon_1(\Delta).
\end{eqnarray}
\indent Now we investigate the solutions around $\gamma^\ast_i$. The same investigation is possible around $\gamma^\ast_j$. 
Take a $\Z^2$-periodic $C^2$-function $v:\R^2\to\R$ with the derivatives uniformly bounded in $\R^2$ such that 
$$\mbox{$v|_{U^i(\delta)}=\bv|_{U^i(\delta)}$ \,and \,$|H_p(x,t,c+v_x(x,t))|\le\lambda_1^{-1}$ in $\R^2$.}$$
Set 
$$g(x,t):=v_t+H(x,t,c+v_x(x,t))-h(c ),$$
where $g=0$ on $U^i(\delta)$. Then, $v^k_{m+1}:=v(x_{m+1},t_k)$ solves the difference equation
\begin{eqnarray}\label{eqnv} \quad\,\,\,\,\,
&&D_tv^{k+1}_m+H(x_m,t_k,c+D_xv^{k}_{m+1})=h(c )-\frac{\dx}{2\lambda}A(x_m,t_k)+g(x_m,t_k)+\varepsilon(x_m,t_k), \\\nonumber
&&A(x,t)=v_{xx}(x,t)-\lambda^2 v_{tt}(x,t),\,\,\,\varepsilon(x_m,t_k)=o(\dx),
\end{eqnarray}
and therefore it satisfies the equality  for any $n,l,l'<l$
\begin{eqnarray}\label{v_Delta}
v^{l+1}_n&=&\inf_{\xi}E_{\mu(\cdot;\xi)}\Big[\sum_{l'<k\le l+1}\{L^{(c )}(\gamma^k,t_{k-1},\xi^k_{m(\gamma^k)})-\frac{\dx}{2\lambda}A(\gamma^k,t_{k-1})\\\nonumber
&&\qquad\qquad+g(\gamma^k,t_{k-1})+\varepsilon(\gamma^k,t_{k-1}) \}\dt +v^{l'}_{m(\gamma^{l'})}\Big]+h(c )(t_{l+1}-t_{l'}).
\end{eqnarray}
Let $\bv^k_{m+1}$ denote the difference solution that yields $\bv_\Delta$. Define $w^k_{m+1}$ on $\mathcal{G}_{odd}$ as 
\[
w^k_{m+1}:= \left\{ \begin{array}{ll}
\bv^k_{m+1}\mbox{\,\,\,\, on $\tilde{U}^i(\delta):=U^i(\delta)\cap\mathcal{G}_{odd}$}\\
v^{k}_{m+1}+\bv^k_{m^\ast(k)+1}-v^k_{m^\ast(k)+1} \mbox{\,\,\,\, for ${m^\ast(k)}<{m}$},\\
v^{k}_{m+1}+\bv^k_{m_\ast(k)+1}-v^k_{m_\ast(k)+1} \mbox{\,\,\,\, for ${m}<{m_\ast(k)}$},
\end{array} \right.
\]
where $(x_{m^\ast(k)+1},t_k)$ and $(x_{m_\ast(k)+1},t_k)$ stand for the right end and left end of $\tilde{U}^i(\delta)|_{t=t_k}$, respectively. Note that difference between space variables of the nearest two end points of $\tilde{U}^i(\delta)$ is $\pm\dx$, due to $|\gamma^\ast_i{}'(s)|=|H_p(\gamma^\ast_i(s),s,c+\bv_x(\gamma^\ast_i(s),s))|<\lambda^{-1}$. It follows from (\ref{estimate}) that 
\begin{eqnarray}\label{w-v}
|w^k_{m+1}-v^k_{m+1}|<\epsilon_1(\Delta)\mbox{\quad on $\mathcal{G}_{odd}$.}
\end{eqnarray}
Set 
$$f(x_m,t_k):=D_tw^{k+1}_m+H(x_m,t_k,c+D_xw^k_{m+1})-h_\Delta(c ),$$
 where $f(x_m,t_k)=0$ for $m$ such that $(x_{m\pm1},t_k)\in\tilde{U}^i(\delta)$. Then, $w^k_{m+1}$ solves the difference equation 
\begin{eqnarray}\label{eqnw} \quad\,\,\,\,\,
D_tw^{k+1}_m+H(x_m,t_k,c+D_xw^{k}_{m+1})=h_\Delta(c )+f(x_m,t_k)
\end{eqnarray}
and therefore it satisfies for any $n,l,l'<l$
\begin{eqnarray}\label{w_Delta}\,\,\,
w^{l+1}_n&=&\inf_{\xi}E_{\mu(\cdot;\xi)}\Big[\sum_{l'<k\le l+1}\{L^{(c )}(\gamma^k,t_{k-1},\xi^k_{m(\gamma^k)})+f(\gamma^k,t_{k-1})\}\dt+w^{l'}_{m(\gamma^{l'})}\Big]\\\nonumber
&&+h_\Delta(c )(t_{l+1}-t_{l'}).
\end{eqnarray}
We will show 
\begin{eqnarray}\label{f-g}
|f(x_m,t_k)-g(x_m,t_k)|<\epsilon_2(\Delta)\mbox{\quad on $\mathcal{G}_{even}$}.
\end{eqnarray}
For $m_\ast(k)$, denoted by $m_\ast$, we have the following two cases: 

(i) $(x_{m_\ast},t_{k+1})\not\in \tilde{U}^i(\delta)$, which implies $(x_{m_\ast+2},t_{k+1})\in \tilde{U}^i(\delta)$. Then, for $m\le m_\ast$, we have 
\begin{eqnarray*}
D_xw^k_{m+1}&=&\frac{1}{2\dx}\{(v^k_{m+1}+\bv^k_{m_\ast+1}-v^k_{m_\ast+1})-(v^k_{m-1}+\bv^k_{m_\ast+1}-v^k_{m_\ast+1})\}\\
&=&D_xv^k_{m+1},\\
D_tw^{k+1}_{m}&=&\frac{1}{\dt}\Big\{  v^{k+1}_{m}+ \bv^{k+1}_{m_\ast+2}-v^{k+1}_{m_\ast+2}\\
&&-\frac{1}{2}(v^k_{m-1}+\bv^k_{m_\ast+1}-v^k_{m_\ast+1}+v^k_{m+1}+\bv^k_{m_\ast+1}-v^k_{m_\ast+1} )  \Big\}\\
&=&D_tv^{k+1}_{m}+D_t\bv^{k+1}_{m_\ast+2}-D_tv^{k+1}_{m_\ast+2}+\lambda^{-1}(D_x\bv^k_{m_\ast+3}-D_xv^k_{m_\ast+3}).
\end{eqnarray*}
Hence, with (\ref{eqnv}) and (\ref{eqnw}), we obtain
\begin{eqnarray}\label{fgfg1}
&&f(x_m,t_k)-g(x_{m},t_k)=H(x_{m_\ast+2},t_k,c+D_xv^k_{m_\ast+3})-H(x_{m_\ast+2},t_k,c+D_x\bv^k_{m_\ast+3})\\\nonumber
&&\qquad \qquad \qquad \qquad \qquad +\lambda^{-1}(D_x\bv^k_{m_\ast+3}-D_xv^k_{m_\ast+3})-\frac{\dx}{2\lambda}A(x_{m},t_k)+\varepsilon(x_{m},t_k)\\\nonumber
&&\qquad \qquad \qquad \qquad \qquad +\frac{\dx}{2\lambda}A(x_{m_\ast+2},t_k)-\varepsilon(x_{m_\ast+2},t_k).
\end{eqnarray}
\indent(ii) $(x_{m_\ast},t_{k+1})\in \tilde{U}^i(\delta)$. Then, for $m\le m_\ast$, we have 
\begin{eqnarray*}
D_xw^k_{m+1}&=&\frac{1}{2\dx}\{(v^k_{m+1}+\bv^k_{m_\ast+1}-v^k_{m_\ast+1})-(v^k_{m-1}+\bv^k_{m_\ast+1}-v^k_{m_\ast+1})\}\\
&=&D_xv^k_{m+1},\\
D_tw^{k+1}_{m}&=&\frac{1}{\dt}\Big\{  v^{k+1}_{m}+ \bv^{k+1}_{m_\ast}-v^{k+1}_{m_\ast}\\
&&-\frac{1}{2}(v^k_{m-1}+\bv^k_{m_\ast+1}-v^k_{m_\ast+1}+v^k_{m+1}+\bv^k_{m_\ast+1}-v^k_{m_\ast+1} )  \Big\}\\
&=&D_tv^{k+1}_{m}+D_t\bv^{k+1}_{m_\ast}-D_tv^{k+1}_{m_\ast}+\lambda^{-1}(D_x\bv^k_{m_\ast+1}-D_xv^k_{m_\ast+1}).
\end{eqnarray*}
Hence, with (\ref{eqnv}) and (\ref{eqnw}), we obtain
\begin{eqnarray}\label{fgfg2}
&&f(x_m,t_k)-g(x_{m},t_k)=H(x_{m_\ast},t_k,c+D_xv^k_{m_\ast+1})-H(x_{m_\ast},t_k,c+D_x\bv^k_{m_\ast+1})\\\nonumber
&&\qquad \qquad \qquad \qquad \qquad +\lambda^{-1}(D_x\bv^k_{m_\ast+1}-D_xv^k_{m_\ast+1})+\frac{\dx}{2\lambda}A(x_{m},t_k)-\varepsilon(x_{m},t_k)\\\nonumber
&&\qquad \qquad \qquad \qquad \qquad -\frac{\dx}{2\lambda}A(x_{m_\ast},t_k)+\varepsilon(x_{m_\ast},t_k)+g(x_{m_\ast},t_k),
\end{eqnarray}
where $g(x_{m_\ast},t_k)\to g(x_{m_\ast+2},t_k)=0$ as $\Delta\to0$. 

 Similar calculation is possible with $m^\ast(k)$.  Therefore, noting that $D_xv^k_{m+1}=v_x(x_m,t_k)+o(\dx)$ and $D_x\bv^k_{m+1}=(\bv_\Delta)_x(x_m,t_k)$, we obtain (\ref{f-g}) through (\ref{estimate}).  

We state the law of large numbers for the minimizing random walks for (\ref{v_Delta}) and (\ref{w_Delta}), where they tend to genuine minimizing curves for exact viscosity solutions. 
\begin{Lemma}\label{LLN1}
(1) For each $x\in X:=\{x\in\R\,|\,|x-\gamma^\ast_i(0)|\le\tilde{\delta}<\delta\}$, $t<0$ and $\varepsilon>0$, define
\begin{eqnarray*} 
&&\mathcal{L}(r ):=\int^0_{t}\{L^{(c )}(r(s),s,r'(s))+g(r(s),s)\}ds+v(r(t),t)+h(c )(-t),\\
&&\gamma^\ast_{x}\mbox{: minimizer for} \inf_{\gamma\in AC,\gamma(0)=x}\mathcal{L}(\gamma),\\
&&D^\varepsilon_{x}:=\{r:[t,0]\to\R\,|\,|r'(s)|<\lambda^{-1}, r(0)=x,\mathcal{L}(r )-\mathcal{L}(\gamma^\ast_{x})\le\varepsilon \}.
\end{eqnarray*}
Then, we have 
$$ \sup_{x\in X}\sup_{r\in D^\varepsilon_{x}}\norm r- \gamma^\ast_{x}\norm_{C^0([t,0])}\to0\mbox{ \,\,\, as $\varepsilon\to0$.}$$

(2) Let $\eta_\Delta(\gamma)$ denote the linear interpolation of (\ref{etaeta}). For each $x_{n+1}\in X$ such that $(x_{n+1},0)\in\mathcal{G}_{odd}$, $\nu>0$ and the minimizing random walk $\gamma=\gamma(x_{n+1},0;\xi^\ast)$ within $[t_{l'},0]$ for $v^0_{n+1}$ or $w^0_{n+1}$,  define
\begin{eqnarray*}
\Omega^\nu_\Delta(x_{n+1}):=\{\gamma\in\Omega\,\,|\,\, \norm\eta_\Delta(\gamma)-\gamma^\ast_{x_{n+1}}\norm_{C^0([t_{l'},0])}\le\nu \}.
\end{eqnarray*}
Then, we have for each fixed $\nu>0$,  
$$\inf_{x_{n+1}}prob(\Omega^\nu_\Delta(x_{n+1}))\to1\mbox{ as $\Delta\to0$}.$$
In particular, the linearly interpolated averaged path $\bar{\gamma}_\Delta$ of the minimizing random walk ($\bar{\gamma}_\Delta$ is also the average of $\eta_\Delta(\gamma)$. See Theorem 3.2, \cite{Soga1}) satisfies $\norm \bar{\gamma}_\Delta-\gamma^\ast_{x_{n+1}}\norm_{C^0([t_{l'},0])}\to0$ as $\Delta\to0$, uniformly with respect to $x_{n+1}$.   
\end{Lemma}
\medskip 

\noindent{\bf Remark.} {\it Since $v(x,0)=\inf_{\gamma\in AC,\gamma(0)=x}\mathcal{L}(\gamma)$ and $v=\bv$, $g=0$ on $U^i(\delta)$ with differentiability of $\bv$, the minimizer $\gamma^\ast_x$ for $\inf_{\gamma\in AC,\gamma(0)=x}\mathcal{L}(\gamma)$ coincides with the unique minimizer $\gamma^\ast$ for $\bv(x,0)$ within a small time interval $[\epsilon,0]$. Since  the dynamics on the unstable manifold implies that $\gamma^\ast(s)$ tends to $\gamma^\ast_i(s)$ as $s\to-\infty$, the points $(\gamma^\ast_x(s),s)$ never come out of $U^i(\delta)$. Hence $\gamma^\ast_x$ is equal to $\gamma^\ast$ on $[t,0]$, and is the unique minimizer for $ \inf_{\gamma\in AC,\gamma(0)=x}\mathcal{L}(\gamma)$. It follows from hyperbolicity of $\gamma^\ast_i$ that there exists $b_1>0$ for which we have for $s\le0$,
\begin{eqnarray}\label{hyperbolic}
|\gamma^\ast_x(s)-\gamma^\ast_i(s)|\le |\gamma^\ast_x(0)-\gamma^\ast_i(0)|e^{b_1s}.
\end{eqnarray}}
\begin{proof}
(1) Proceeding by a proof via contradiction, we assume that there exists a sequence $\varepsilon_j\to0$ as $j\to\infty$ for which we have $b_2>0$ such that 
$$\sup_{x\in X}\sup_{r\in D^{\varepsilon_j}_{x}}\norm r- \gamma^\ast_{x}\norm_{C^0}\ge 3b_2\mbox{\,\, for all $j$.}$$
 We can take $x_j\in X$ and $r_j\in D^{\varepsilon_j}_{x_j}$ such that 
 $$\norm r_j- \gamma^\ast_{x_j}\norm_{C^0}\ge 2b_2\mbox{\,\, for all $j$.}$$ 
We have a convergent subsequence of $\{x_j\}$, still denoted by $\{x_j\}$, that tends to $x_\sharp\in X$ as $j\to\infty$. Since $\gamma^\ast_{x_j}{}'(0)=H_p(x_j,0,c+\bv_x(x_j,0))\to\gamma^\ast_{x_\sharp}{}'(0)=H_p(x_\sharp,0,c+\bv_x(x_\sharp,0))$ as $j\to\infty$, we have $\gamma^\ast_{x_j}\to\gamma^\ast_{x_\sharp}$, $\gamma^\ast_{x_j}{}'\to\gamma^\ast_{x_\sharp}{}'$ uniformly and $\mathcal{L}(\gamma^\ast_{x_j})\to\mathcal{L}(\gamma^\ast_{x_\sharp})$, as $j\to\infty$.   Hence, we see that 
\begin{eqnarray*}
|\mathcal{L}(r_j)-\mathcal{L}(\gamma^\ast_{x_\sharp})|&\le& |\mathcal{L}(r_j)-\mathcal{L}(\gamma^\ast_{x_j})|+|\mathcal{L}(\gamma^\ast_{x_j})-\mathcal{L}(\gamma^\ast_{x_\sharp})|\\
&\le&\varepsilon_j+|\mathcal{L}(\gamma^\ast_{x_j})-\mathcal{L}(\gamma^\ast_{x_\sharp})|\to0\mbox{\,\, as $j\to\infty$.}
\end{eqnarray*}
 Therefore, we have a subsequence of $\{r_j\}$, still denoted by $\{r_j\}$, that tends to a curve $r_\sharp$ uniformly as $j\to\infty$ with $\mathcal{L}(r_\sharp)=\mathcal{L}(\gamma^\ast_{x_\sharp})$. Since 
$$\norm r_\sharp-\gamma^\ast_{x_\sharp}\norm_{C^0}\ge -\norm r_\sharp-r_j\norm_{C^0}+\norm r_j-\gamma^\ast_{x_j}\norm_{C^0}-\norm \gamma^\ast_{x_j}-\gamma^\ast_{x_\sharp}\norm_{C^0}\ge b_2,$$
 there are two minimizing curves, which is a contradiction.  

(2) For the minimizing random walk $\gamma=\gamma(x_{n+1},0;\xi^\ast)$ for $v^0_{n+1}$, we have with (1) of Proposition \ref{eta}, 
\begin{eqnarray*} 
v^0_{n+1}&=&E_{\mu(\cdot;\xi^\ast)}\Big[ \sum_{l'<k\le 0} \{L^{(c )}(\gamma^k,t_{k-1},\xi^k_{m(\gamma^k)})+g(\gamma^k,t_{k-1}) \\
&&-\frac{\dx}{2\lambda}A(\gamma^k,t_{k-1})+\varepsilon(\gamma^k,t_{k-1}) \}\dt+v(\gamma^{l'},t_{l'}) \Big] + h(c )(-t_{l'})\\
&=&E_{\mu(\cdot;\xi^\ast)}\Big[ \sum_{l'<k\le 0} \{L^{(c )}(\eta^k(\gamma),t_{k-1},\xi^k_{m(\gamma^k)})+g(\eta^k(\gamma),t_{k-1})  \}\dt\\
&&+v(\eta^{l'}(\gamma),t_{l'}) \Big] + h(c )(-t_{l'})+\epsilon_4(\Delta)\\
&=&E_{\mu(\cdot;\xi^\ast)}\Big[ \int^0_{t_{l'}}\{L^{(c )}(\eta_\Delta(\gamma)(s),s,\eta_\Delta(\gamma)'(s))+g(\eta_\Delta(\gamma)(s),s)\}ds\\
&&+v(\eta_\Delta(\gamma)(t_{l'}),t_{l'}) \Big] + h(c )(-t_{l'})+\epsilon_5(\Delta)\\
&=&E_{\mu(\cdot;\xi^\ast)}\Big[ \mathcal{L}(\eta_\Delta(\gamma))\Big]+\epsilon_5(\Delta).
\end{eqnarray*}
On the other hand, $v^0_{n+1}=v(x_{n+1},0)=\mathcal{L}(\gamma^\ast_{x_{n+1}})$. Hence, we have    
\begin{eqnarray}\label{e66}
0\le E_{\mu(\cdot;\xi^\ast)}\Big[ \mathcal{L}(\eta_\Delta(\gamma))-\mathcal{L}(\gamma^\ast_{x_{n+1}})\Big]\le\epsilon_6(\Delta).
\end{eqnarray}
Here, $\epsilon_6(\Delta)>0$ are independent of $x_{n+1}$.  Set
$$\Omega^+:=\{\gamma\in\Omega\,|\,  \mathcal{L}(\eta_\Delta(\gamma))-\mathcal{L}(\gamma^\ast_{x_{n+1}})\ge \epsilon_6(\Delta)^{\frac{1}{2}} \}.$$
Then, by (\ref{e66}), we have $prob(\Omega^+)\le\epsilon_6(\Delta)^{1/2}$. 
It follows from (1) of this lemma that  there exists $\varepsilon_0(\nu)>0$ for which, if $|\Delta|$ is small enough to realize  $\epsilon_6(\Delta)^{1/2}\le \varepsilon_0(\nu)$, we have $\norm \eta_\Delta(\gamma)-\gamma^\ast_{x_{n+1}}\norm_{C^0}\le\nu$ for all $\gamma\in\Omega\setminus\Omega^+$. 
Therefore, we obtain  $\Omega\setminus\Omega^+\subset \Omega^\nu_\Delta(x_{n+1})$ and $1-\epsilon_6(\Delta)^{1/2}\le prob(\Omega_\Delta^\nu(x_{n+1}))$. 

Similarly, for the minimizing random walk $\gamma=\gamma(x_{n+1},0;\xi^\ast)$ for $w^0_{n+1}$, we have with(\ref{w-v}), (\ref{f-g}) and (3) of Proposition  \ref{preliminary}, 
\begin{eqnarray*} 
w^0_{n+1}&=&E_{\mu(\cdot;\xi^\ast)}\Big[ \sum_{l'<k\le 0} \{L^{(c )}(\gamma^k,t_{k-1},\xi^k_{m(\gamma^k)})+f(\gamma^k,t_{k-1}) \}\dt+w^{l'}_{m(\gamma^{l'})} \Big] \\&&
+ h_\Delta(c )(-t_{l'})\\
&=&E_{\mu(\cdot;\xi^\ast)}\Big[ \sum_{l'<k\le 0} \{L^{(c )}(\gamma^k,t_{k-1},
\xi^k_{m(\gamma^k)})+g(\gamma^k,t_{k-1}) \}\dt
+v(\gamma^{l'},t_{l'}) \Big] \\
&&+ h(c )(-t_{l'})+\epsilon_7(\Delta)\\
&=&E_{\mu(\cdot;\xi^\ast)}\Big[ \mathcal{L}(\eta_\Delta(\gamma))\Big]+\epsilon_8(\Delta),\\
|w^0_{n+1}-v^0_{n+1}|&=&|w^0_{n+1}-\mathcal{L}(\gamma^\ast_{x_{n+1}})|\le \epsilon_1(\Delta).
\end{eqnarray*}
Therefore we have an estimate similar to (\ref{e66}), and we may follow the same way as the above. 
\end{proof}
\indent Now, we compare (\ref{v_Delta}) and (\ref{w_Delta}). Take $n$ so that $w^0_{n+1}-v^0_{n+1}$ is equal to 
$$\max_{\tilde{U}^i(\delta/3)|_{k=0}}(w^0_{m+1}-v^0_{m+1}).$$
Let $T\in\N$ be such that $qT>\frac{1}{b_1}\log4$, let $l':=-qT\cdot 2K$ and let $\gamma^\ast_{x_{n+1}}:[-qT,0]\to \R$ be the minimizing curve for $v(x_{n+1},0)$.  Then, (\ref{hyperbolic}) implies 
\begin{eqnarray}\label{decay}
|\gamma^\ast_{x_{n+1}}(-qT)-\gamma^\ast_i(-qT)|\le \frac{1}{4}|\gamma^\ast_{x_{n+1}}(0)-\gamma^\ast_i(0)|.
\end{eqnarray}
Let $\xi^\ast$ be the minimizer for $v^0_{n+1}$. Then, it follows from (\ref{v_Delta}) and (\ref{w_Delta}) that 
\begin{eqnarray}\nonumber
w^0_{n+1}-v^0_{n+1}&\le& E_{\mu(\cdot;\xi^\ast)}\Big[ \sum_{l'<k\le 0}\big\{ f(\gamma^k,t_{k-1})-g(\gamma^k,t_{k-1})+\frac{\dx}{2\lambda}A(\gamma^k,t_{k-1})\\\nonumber
&&-\varepsilon(\gamma^k,t_{k-1})   \big\}\dt + (w^{l'}_{m(\gamma^{l'})}  - v^{l'}_{m(\gamma^{l'})}) \Big]+(h_\Delta(c )-h(c ))qT\\\nonumber
&=&\frac{\dx}{2\lambda}\int^0_{-qT}A(\gamma^\ast_{x_{n+1}}(s),s)ds\\\label{111}
&&+\frac{\dx}{2\lambda}E_{\mu(\cdot;\xi^\ast)}\Big[\int^0_{-qT}A(\eta_\Delta(\gamma)(s),s)ds\Big]-\frac{\dx}{2\lambda}\int^0_{-qT}A(\gamma^\ast_{x_{n+1}}(s),s)ds\\\label{112}
&&+\frac{\dx}{2\lambda}E_{\mu(\cdot;\xi^\ast)}\Big[ \sum_{l'<k\le 0} A(\gamma^k,t_{k-1})\dt-\int^0_{-qT}A(\eta_\Delta(\gamma)(s),s)ds\Big]\\\label{113}
&&-E_{\mu(\cdot;\xi^\ast)}\Big[ \sum_{l'<k\le 0} \varepsilon(\gamma^k,t_{k-1})\dt\Big]\\\label{114}
&& +E_{\mu(\cdot;\xi^\ast)}\Big[ \sum_{l'<k\le 0}\{f(\gamma^k,t_{k-1})-g(\gamma^k,t_{k-1})\}\dt\Big]  \\\label{115}
&&+E_{\mu(\cdot;\xi^\ast)}\Big[ (w^{l'}_{m(\gamma^{l'})}  - v^{l'}_{m(\gamma^{l'})})-(w^0_{n+1}-v^0_{n+1}) \Big] \\\nonumber
&&+(w^0_{n+1}-v^0_{n+1})+(h_\Delta(c )-h(c ))qT.
\end{eqnarray}
The terms in (\ref{111})--(\ref{115}) are denoted by $R_1,\ldots,R_5$, respectively. Note that $A(x,t)$ is uniformly continuous. By (2) of Lemma \ref{LLN1} and continuity of $A$, we obtain $R_1\le \epsilon_{11}(\Delta)\dx$. 
It follows from (1) of Proposition \ref{eta} that 
$$prob\{\gamma\in\Omega\,|\,|\eta^k(\gamma)-\gamma^k|>\dx^{\frac{1}{4}}\}\le b_3\dx^\frac{1}{4},$$
and hence we obtain $R_2\le\epsilon_9(\Delta)\dx$. Since $\varepsilon=o(\dx)$, we have $R_3\le \epsilon_{10}(\Delta)\dx$. 
Since the minimizer $\xi^\ast$ is given as $\xi^\ast{}^{k+1}_{m}=H_p(x_m,t_{k},c+D_xv^k_{m+1})$ and satisfies the Lipschitz condition in (2) of 
Proposition~\ref{eta}, Chebychev's inequality yields for each $\alpha>0$,
\begin{eqnarray}\label{Che}
prob\{\gamma\in\Omega\,\,|\,\,|\gamma^k-\bar{\gamma}^k|>\alpha  \}\le \frac{\sigma^k}{\alpha^2}\le b_4\frac{\dx}{\alpha^2} \mbox{\,\,\,\, for all $l'\le k\le 0$}.
\end{eqnarray}
Since the averaged path $\bar{\gamma}_\Delta$ of the minimizing random walk is $C^0$-close to $\gamma^\ast_{x_{n+1}}$, and $\gamma^\ast_{x_{n+1}}$ tends to $\gamma^\ast_i$, the points $(\bar{\gamma}_\Delta(s),s)$ always stay in $U^i(4\delta/9)$ for small $\Delta$. Hence, we have $\{(x,s)\in\R^2\,|\, |x-\bar{\gamma}_\Delta(s)|\le\delta/2 \}\subset U^i(\delta)$. With the fact $f(\gamma^k,t_{k-1})-g(\gamma^k,t_{k-1})=0$ on $\tilde{U}^i(\delta)$, (\ref{f-g}) and (\ref{Che}) with $\alpha=\delta/2$,  we obtain
\begin{eqnarray*} 
R_4\le \epsilon_{12}(\Delta)\frac{\dx}{(\delta/2)^2}.
\end{eqnarray*}
Due to the choice of $n$, we have $ (w^{l'}_{m(\gamma^{l'})}  - v^{l'}_{m(\gamma^{l'})})-(w^0_{n+1}-v^0_{n+1})<0$ for $\gamma$ such that $\gamma^{l'}$ is contained in the $\delta/3$-neighborhood of $\gamma^\ast_i(-qT)$. We see that the $\delta/6$-neighborhood of $\bar{\gamma}_\Delta(-qT)$ is contained in the $\delta/3$-neighborhood of $\gamma^\ast_i(-qT)$. In fact, it follows from (\ref{decay}) that we have $|\gamma^\ast_{x_{n+1}}(-qT)-\gamma^\ast_i(-qT)|\le \delta/12$, and hence we have $|\bar{\gamma}_\Delta(-qT)-\gamma^\ast_i(-qT)|\le 1.5\delta/12$ for small $\Delta$. By (\ref{w-v}) and (\ref{Che}) with $\alpha=\delta/6$, we obtain 
\begin{eqnarray*}
R_5\le \epsilon_{13}(\Delta)\frac{\dx}{(\delta/6)^2}. 
\end{eqnarray*}
It follows from (\ref{hyperbolic}) and the relation $v_{xx}=\zeta^i_{xx}, v_{tt}=\zeta^i_{tt}$ on $U^i(\delta)$ that  
$$\left|\int^0_{-qT}A(\gamma^\ast_{x_{n+1}}(s),s)ds- T\Gamma^i\right|\le \nu(\delta),$$
where $\nu(\delta)$ comes from the modulus of the continuity of $A$. Therefore we obtain
\begin{eqnarray}\label{upper1}
(h(c )-h_\Delta(c ))qT\le T\Gamma^i\frac{\dx}{2\lambda}+\nu(\delta)\frac{\dx}{2\lambda}+\frac{\epsilon_{14}(\Delta)}{\delta^2}\dx+\epsilon_{15}(\Delta)\dx.
\end{eqnarray}
We remark that if we do not introduce $w^k_{m+1}$, or if we do not choose the above $n$, it is not easy to see that $\Gamma^i$ is the principle term in (\ref{upper1}).   

\indent Next, we will obtain a converse estimate with the minimizer $\xi^\ast$ for $w^{0}_{n+1}$, where we take $n$ so that $w^0_{n+1}-v^0_{n+1}$ is equal to 
$$\min_{\tilde{U}^i(\delta/3)|_{k=0}}(w^0_{m+1}-v^0_{m+1}).$$
It follows from (\ref{v_Delta}) and (\ref{w_Delta}) that
\begin{eqnarray*}\nonumber
w^0_{n+1}-v^0_{n+1}&\ge& E_{\mu(\cdot;\xi^\ast)}\Big[ \sum_{l'<k\le 0}\big\{ f(\gamma^k,t_{k-1})-g(\gamma^k,t_{k-1})+\frac{\dx}{2\lambda}A(\gamma^k,t_{k-1})\\\nonumber
&&-\varepsilon(\gamma^k,t_{k-1})   \big\}\dt + (w^{l'}_{m(\gamma^{l'})}  - v^{l'}_{m(\gamma^{l'})}) \Big]+(h_\Delta(c )-h(c ))qT\\\nonumber
&=&\frac{\dx}{2\lambda}\int^0_{-qT}A(\gamma^\ast_{x_{n+1}}(s),s)ds\\
&&+\frac{\dx}{2\lambda}E_{\mu(\cdot;\xi^\ast)}\Big[ \int^0_{-qT}A(\eta_\Delta(\gamma)(s),s)ds\Big]-\frac{\dx}{2\lambda}\int^0_{-qT}A(\gamma^\ast_{x_{n+1}}(s),s)ds\\\
&&+\frac{\dx}{2\lambda}E_{\mu(\cdot;\xi^\ast)}\Big[ \sum_{l'<k\le 0} A(\gamma^k,t_{k-1})\dt-\int^0_{-qT}A(\eta_\Delta(\gamma)(s),s)ds\Big]\\
&&-E_{\mu(\cdot;\xi^\ast)}\Big[ \sum_{l'<k\le 0} \varepsilon(\gamma^k,t_{k-1})\dt\Big]\\
&&+E_{\mu(\cdot;\xi^\ast)}\Big[ \sum_{l'<k\le 0}\{f(\gamma^k,t_{k-1})-g(\gamma^k,t_{k-1})\}\dt\Big]\\
&&+E_{\mu(\cdot;\xi^\ast)}\Big[ (w^{l'}_{m(\gamma^{l'})}  - v^{l'}_{m(\gamma^{l'})})-(w^0_{n+1}-v^0_{n+1}) \Big] \\\nonumber
&&+(w^0_{n+1}-v^0_{n+1})+(h_\Delta(c )-h(c ))qT.
\end{eqnarray*}
Since the averaged path $\bar{\gamma}_\Delta$ of the minimizing random walk is $C^0$-close to $\gamma^\ast_{x_{n+1}}$, and $\gamma^\ast_{x_{n+1}}$ tends to $\gamma^\ast_i$, the points $(\bar{\gamma}_\Delta(s),s)$ always stay in $U^i(4\delta/9)$ for small $\Delta$. Hence, by (A4),  the minimizer $\xi^\ast$, which is given by $\xi^\ast{}^{k+1}_{m}=H_p(x_m,t_{k},c+D_xw^k_{m+1})$, satisfies the Lipschitz condition in (2) of Proposition \ref{eta}. Therefore, with the estimate $\sigma^k=O(\Delta x)$, we can repeat the same argument as the above for the lower bound of each term.  In particular, due to the choice of the $n$, we have $ (w^{l'}_{m(\gamma^{l'})}  - v^{l'}_{m(\gamma^{l'})})-(w^0_{n+1}-v^0_{n+1})>0$ for $\gamma$ such that $\gamma^{l'}$ is contained in the $\delta/3$-neighborhood of $\gamma^\ast_i(-qT)$, and the $\delta/6$-neighborhood of $\bar{\gamma}_\Delta(-qT)$ is contained in the $\delta/3$-neighborhood of $\gamma^\ast_i(-qT)$. Thus we obtain
\begin{eqnarray}\label{lower1}
(h(c )-h_\Delta(c ))qT\ge T\Gamma^i\frac{\dx}{2\lambda}-\nu(\delta)\frac{\dx}{2\lambda}-\frac{\epsilon_{14}(\Delta)}{\delta^2}\dx-\epsilon_{15}(\Delta)\dx.
\end{eqnarray}
\indent In this way, we also obtain for $\gamma^\ast_j$,
\begin{eqnarray}\label{upper2}
(h(c )-h_\Delta(c ))qT\le T\Gamma^j\frac{\dx}{2\lambda}+\nu(\delta)\frac{\dx}{2\lambda}+\frac{\epsilon_{14}(\Delta)}{\delta^2}\dx+\epsilon_{15}(\Delta)\dx,\\\label{lower2}
(h(c )-h_\Delta(c ))qT\ge T\Gamma^j\frac{\dx}{2\lambda}-\nu(\delta)\frac{\dx}{2\lambda}-\frac{\epsilon_{14}(\Delta)}{\delta^2}\dx-\epsilon_{15}(\Delta)\dx.
\end{eqnarray}
If $\Gamma^i\neq\Gamma^j$ and if we take small $\delta>0$, there is a contradiction in (\ref{upper1}),(\ref{lower1}),(\ref{upper2}) and (\ref{lower2}) as $\Delta\to0$.   This ends the proof of Proposition \ref{key}.
\end{proof}
 Currently, we do not have any appropriate estimate of $\sigma^k$ without (A4) for the rate of the law of large numbers in the minimizing random walk for $w^0_{n+1}$. A possible way to remove (A4) would be to sharpen Chebychev's inequality. However, this seems not easy, because our random walk is not the sum of i.i.d. random variables.   

Next we show that, for certain values of $c$, the limit of any convergent subsequence $\{\bv^{(c )}_\Delta\}$ has transition points at $\gamma^\ast_{i^\ast}$. For this purpose, we introduce the following symbols: 
Let $x^+_k$ (resp. $x^-_k$) $\in\{ pr\gamma^\ast_i(0),\ldots,pr\gamma^\ast_i(q-1)\}_{i\neq i^\ast}$ be the nearest point to $pr\gamma^\ast_{i^\ast}(k)$ on the right (resp. left) for $k=0,\ldots,q-1$ (if $\gamma^\ast_{i^\ast}(0)=0$, $x_0^-$ is the point of $\{ pr\gamma^\ast_i(0),\ldots,pr\gamma^\ast_i(q-1)\}_{i\neq i^\ast}$ that is nearest to $1$). Let $\mathcal{U}^\pm_k$ (reps. $\mathcal{S}^\pm_k$) be the segment between $pr\gamma^\ast_{i^\ast}(k)$ and $x^\pm_k$ of  the projected unstable (resp. stable) manifold at $t=0$ of $\gamma^\ast_{i^\ast}$. Set 
$$d^\pm:=\sum_{0\le k<q}[\mbox{area surrounded by $\mathcal{U}^\pm_k$ and $\mathcal{S}^\pm_k$}].$$
It follows from Subsection 3.3 that, for each $c\in J^-:=[c_0,c_1-d^-]$, there exists $\bvc$  such that graph$(c+\bvc_x)|_{t=0}\supset\cup_{0\le k<q}\mathcal{U}^-_k$. Similarly, for each $c\in J^+:=[c_0+d^+,c_1]$, there exists $\bvc$  such that graph$(c+\bvc_x)|_{t=0}\supset\cup_{0\le k<q}\mathcal{U}^+_k$. Since there exists $\bvc$  such that graph$(c+\bvc_x)|_{t=0}\supset\cup_{0\le k<q}(\mathcal{U}^-_k\cup\mathcal{U}^+_k)$, we see that $J^-\cap J^+\neq\phi$. 
\begin{Prop}\label{existence} 
For each $c\in J^-\cap J^+$, the limit of any convergent subsequence $\{\bv^{(c )}_\Delta\}$ has transition points at $\gamma^\ast_{i^\ast}$. 
\end{Prop}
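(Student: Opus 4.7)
Plan: I argue by contradiction, combining Proposition~\ref{key} with a competing upper bound built from the explicit solution $\bv_-^{(c)}$ supplied by $c\in J^-\cap J^+$. Suppose that, along some convergent subsequence $\{\bvc_{\Delta_n}\}$, the limit $\bv=\bvc$ has no transition at $\gamma^*_{i^*}$. Its set $S$ of transition indices then lies in $\{1,\dots,I\}\setminus\{i^*\}$. If $S$ is non-empty, Proposition~\ref{key} forces all $i\in S$ to share a common value $\Gamma^\sharp$, and (A3) gives $\Gamma^\sharp>\Gamma^{i^*}$; its proof, applied to any $i\in S$, yields the sharp asymptotic
\[
(h(c)-h_\Delta(c))\,qT \;=\; T\,\Gamma^\sharp\,\frac{\dx}{2\lambda}+o(\dx).
\]
(If $S$ is empty, a simpler variant of the argument below applies.)

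The plan is to establish the competing bound
\[
(h(c)-h_\Delta(c))\,q \;\le\; \Gamma^{i^*}\,\frac{\dx}{2\lambda}+o(\dx),
\]
which, combined with the display above, yields $\Gamma^\sharp\le\Gamma^{i^*}$, contradicting (A3). To obtain it I would rerun the upper estimate (\ref{upper1}) verbatim, but choose the smooth test function $v$ (which in the proof of Proposition~\ref{key} matched the limit $\bv$ on $U^{i^*}(\delta)$) to instead match $\bv_-^{(c)}$ on $U^{i^*}(\delta)$. Since $c\in J^-$, Section~3.3 supplies $\bv_-^{(c)}$ with $\mbox{graph}(c+\bv_{-,x}^{(c)})|_{t=0}\supset\cup_k\mathcal{U}^-_k$, so $\bv_-^{(c)}$ is $C^2$ on $U^{i^*}(\delta)$ and its $A$-integral around $\gamma^*_{i^*}$ over one period is exactly $\Gamma^{i^*}$. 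The glued function $w$, defined as in (\ref{fgfg1})--(\ref{fgfg2}), then produces a computation in which the $\Gamma$-integral along the averaged minimizing random walk yields $\Gamma^{i^*}$ in place of $\Gamma^\sharp$; Lemma~\ref{LLN1} and Proposition~\ref{eta} handle the $\dx$-order errors as before. The analogous argument with $\bv_+^{(c)}$ (available because $c\in J^+$) serves to cover both sides of $\gamma^*_{i^*}$.

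The main obstacle is the boundary matching on $\partial U^{i^*}(\delta)$. In the original argument, the uniform convergence $\bv_\Delta^{(c)}\to\bv$ made the analogue of (\ref{estimate}) infinitesimal, which was what allowed the per-period bound to have no $O(1)$ defect. Here $\bv_-^{(c)}\neq\bv^{(c)}$ away from $\gamma^*_{i^*}$ --- precisely because $\bv^{(c)}$ has no transition there --- so the boundary defect is genuinely $O(1)$. To absorb it I would exploit the hyperbolic contraction (\ref{hyperbolic}): after time $qT$ the minimizing random walk starting inside $U^{i^*}(\delta)$ concentrates in a tube of exponentially small width $e^{-b_1qT}$ around $\gamma^*_{i^*}$, so Chebyshev-type estimates analogous to (\ref{Che}) --- available thanks to the Lipschitz control of (A4) --- damp the boundary contribution by $e^{-b_1qT}$. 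Choosing $T\sim|\log\dx|/b_1$ renders this contribution $o(\dx)$ while keeping the per-period leading term at $\Gamma^{i^*}\dx/(2\lambda q)$. Executing this diagonal passage $\Delta\to 0$, $T\to\infty$ rigorously, and verifying that the ``outside'' contributions (where the $g$-term is no longer zero) can be kept at the same order $o(\dx)$, is the technical heart of the argument.
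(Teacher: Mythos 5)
Your plan correctly identifies the structure (contradiction, lower bound from Proposition~\ref{key} applied to whatever $\gamma^\ast_i$ does carry a transition, competing upper bound built from a special solution furnished by $c\in J^-\cap J^+$) and even correctly diagnoses the central obstacle: because $\bv^{(c)}$ has \emph{no} transition at $\gamma^\ast_{i^\ast}$, your chosen comparison solution $\bv^{(c)}_\pm$ disagrees with $\bv^{(c)}$ by an $O(1)$ amount on $\partial U^{i^\ast}(\delta)$, so the gluing defect in the analogue of~(\ref{f-g}) is not infinitesimal. The gap is in the proposed fix. You claim that the hyperbolic contraction~(\ref{hyperbolic}) makes the minimizing random walk concentrate in a tube of width $e^{-b_1 qT}$ around $\gamma^\ast_{i^\ast}$, and that this damps the boundary contribution. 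That is not what the estimates give you: (\ref{hyperbolic}) controls the \emph{deterministic} minimizer $\gamma^\ast_x$ (and hence the averaged path $\bar\gamma_\Delta$), not the spread of the random walk around its mean. Proposition~\ref{eta}(2), which is all that (A4) buys, bounds the variance by $\sigma^k\le\frac{e^{4\theta(t_{l+1}-t_k)}}{4\theta\lambda}\dx$ --- a quantity that \emph{grows} exponentially in the elapsed time, not one that shrinks. So the Chebyshev probability of hitting the boundary of $U^{i^\ast}(\delta)$ stays of order $\dx/\delta^2$ at best, independently of how long the walk has run, and the boundary contribution to $R_4$ is of order $O(1)\cdot qT\cdot\dx/\delta^2$. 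After dividing by $qT$ this is $O(\dx/\delta^2)$, which is the \emph{same} order as the target term $\Gamma^{i^\ast}\dx/(2\lambda)$ and cannot be beaten by taking $\delta$ small. Letting $T\sim|\log\dx|$ makes things worse, since it pushes the $e^{4\theta qT}$ factor in the variance bound toward a positive power of $1/\dx$, and also invalidates the fixed-$T$ formulation of Lemma~\ref{LLN1}.

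The paper sidesteps this entirely by choosing a different gluing domain. Instead of matching on $U^{i^\ast}(\delta)$, it defines $V(\delta)$ to stretch from $\gamma^\ast_{i^\ast-1}(t)$ (resp.\ $\gamma^\ast_{i^\ast}(t)-\delta$, resp.\ $\gamma^\ast_{i^\ast-1}(t)$) to $\gamma^\ast_{i^\ast}(t)+\delta$ (resp.\ $\gamma^\ast_{i^\ast+1}(t)$, resp.\ $\gamma^\ast_{i^\ast+1}(t)$) according to three cases for the structure of graph$(c+\bv_x)$ near $\gamma^\ast_{i^\ast}$, and takes as comparison a single $\Z^2$-periodic viscosity solution $\hat v^{(c)}$ whose graph at $t=0$ contains $\bigcup_k(\mathcal U^-_k\cup\mathcal U^+_k)$. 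The point of this choice is that $\bv_x=\hat v_x$ \emph{on $\partial V(\delta)$} (the boundary passes through the neighboring hyperbolic orbits, or through a common branch of the unstable manifold), so the boundary defect in~(\ref{fgfg1})--(\ref{fgfg2}) is $o(1)$ in $\Delta$, exactly as in Proposition~\ref{key}, and no exponential damping is needed. The paper even flags explicitly that the naive gluing on $\tilde U^{i^\ast}(\delta)$ fails --- this is precisely the route you took. The normalization $\bv(\gamma^\ast_{i^\ast}(0),0)=\hat v(\gamma^\ast_{i^\ast}(0),0)=0$ and the sign analysis of $\bv-\hat v$ on $V(\delta)$ then replace your missing control of $w^0_{n+1}-v^0_{n+1}$.
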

\begin{proof}
Let $c\in J^-\cap J^+$. Suppose that there exists a convergent subsequence $\{\bv_\Delta=\bv^{(c )}_\Delta \}$ whose limit $\bv$ does not have transition points at $\gamma^\ast_{i^\ast}$. The following is our strategy: Since $\bv$ has transition points at some $\gamma^\ast_i$ ($i\neq i^\ast$), we have the lower estimate (\ref{lower1}). Since $\bv$ does not have transition points at $\gamma^\ast_{i^\ast}$, the upper estimate (\ref{upper1}) with $i=i^\ast$ is not directly available.  However, due to the choice of $c$ and uniqueness of the value $h(c)$, we have the $\Z^2$-periodic viscosity solution $\hat{v}=\hat{v}^{(c )}$ of (\ref{HJ}) such that $\hat{v}(\gamma^\ast_{i^\ast}(0),0)=0$ and 
$$\mbox{graph}(c+\hat{v}_x)|_{t=0}\supset\bigcup_{0\le k<q}(\mathcal{U}^-_k\cup\mathcal{U}^+_k).$$  
Then, we can compare $\bv_\Delta$ and $\hat{v}$ with modification of the argument in Proposition \ref{key}, obtaining the upper estimate (\ref{upper1}) with $i=i^\ast$. (A3) yields contradiction as $\Delta\to0$.

Adding constants if necessary, we have $\bv_\Delta(\gamma^\ast_{i^\ast}(0),0)=\bv(\gamma^\ast_{i^\ast}(0),0)=0$. The entropy condition of $\bv_x$  implies the following three cases: graph$(c+\bv_x)|_{t=0}$ contains 

(i) $\mathcal{S}^-_k$ and a part of $\mathcal{U}^+_k$\,\,\, for $k=0,\ldots,q-1$,

(ii) a part of $\mathcal{U}^-_k$ and $\mathcal{S}^+_k$\,\,\, for $k=0,\ldots,q-1$,

(iii)  $\mathcal{S}^-_k$ and $\mathcal{S}^+_k$\,\,\, for $k=0,\ldots,q-1$.   

\noindent Take small $\delta>0$ so that 

(i) $\Rightarrow$ graph$(c+\bv_x)\supset pr\mathcal{U}^{i^\ast}_{loc}(\delta)|_{\{(x,t)\in\R^2 \,|\, \gamma^\ast_{i^\ast}(t)\le x\le \gamma^\ast_{i^\ast}(t)+\delta \}}$, 

(ii) $\Rightarrow$ graph$(c+\bv_x)\supset pr\mathcal{U}^{i^\ast}_{loc}(\delta)|_{\{(x,t)\in\R^2 \,|\, \gamma^\ast_{i^\ast}(t)-\delta\le x\le \gamma^\ast_{i^\ast}(t) \}}$, 

(iii) $\Rightarrow$ $\delta\le \min\{  \min_{t\in\R}|\gamma^\ast_{i^\ast-1}(t)- \gamma^\ast_{i^\ast}(t)|,\min_{t\in\R}|\gamma^\ast_{i^\ast}(t)-\gamma^\ast_{i^\ast+1}(t)|\}$.   

\noindent Following the proof of Proposition \ref{key}, we define a $\Z^2$-periodic $C^2$-function $v$ with $\hat{v}$, instead of $\bv$, so that 

(i) $\Rightarrow$ $v=\hat{v}$ on $V(\delta):=\{(x,t)\in\R^2\,\,|\,\,\gamma^\ast_{i^\ast-1}(t)\le x\le \gamma^\ast_{i^\ast}(t)+\delta \}$,
 
(ii) $\Rightarrow$ $v=\hat{v}$ on $V(\delta):=\{(x,t)\in\R^2\,\,|\,\,\gamma^\ast_{i^\ast}(t)-\delta\le x\le \gamma^\ast_{i^\ast+1}(t) \}$,

(iii) $\Rightarrow$ $v=\hat{v}$ on $V(\delta):=\{(x,t)\in\R^2\,\,|\,\,\gamma^\ast_{i^\ast-1}(t)\le x\le \gamma^\ast_{i^\ast+1}(t) \}$  (independent of $\delta$).

\noindent Let $\bv^k_{m+1}$ denote the difference solution that yields $\bv_\Delta$. We also define $w^k_{m+1}$ from $\bv^k_{m+1}$ and $v$ with $\tilde{V}(\delta):=V(\delta)\cap \mathcal{G}_{odd}$ as
\[
w^k_{m+1}:= \left\{ \begin{array}{ll}
\bv^k_{m+1}\mbox{\,\,\,\, on $\tilde{V}(\delta)$}\\
v^{k}_{m+1}+\bv^k_{m^\ast(k)+1}-v^k_{m^\ast(k)+1} \mbox{\,\,\,\, for ${m^\ast(k)}<{m}$},\\
v^{k}_{m+1}+\bv^k_{m_\ast(k)+1}-v^k_{m_\ast(k)+1} \mbox{\,\,\,\, for ${m}<{m_\ast(k)}$},
\end{array} \right.
\]
where $(x_{m^\ast(k)+1},t_k)$ and $(x_{m_\ast(k)+1},t_k)$ stand for the right end and left end of $\tilde{V}(\delta)|_{t=t_k}$, respectively.  Then we may have $g$ and $f$, as well as the difference equations for $v^k_{m+1}:=v(x_{m+1},t_k)$ and $w^k_{m+1}$, in the same way as that of the proof of Proposition \ref{key}.

 Since $\bv_x=\hat{v}_x$ on the boundary $\partial V(\delta)$, 
we have for the $\epsilon_{16}(\Delta)$-neighborhood of $\partial V(\delta)$, denoted by $B(\Delta)$,
$$\sup_{B(\Delta)}|\bv_x-\hat{v}_x|\le\epsilon_{17}(\Delta).$$ 
Since $(\bv_\Delta)_x\to\bv_x$ uniformly on $V(\delta)$ including its boundary as $\Delta\to0$, it follows from the continuity of $v_x$ on $\partial V(\delta)$ and the relation  $|(\bv_\Delta)_x-v_x|\le|(\bv_\Delta)_x-\bv_x|+|\bv_x-\hat{v}_x|+|\hat{v}_x-v_x|$ that we have 
$$\sup_{B(\Delta)}|(\bv_\Delta)_x-v_x|\le \epsilon_{18}(\Delta).$$ 
Hence, (\ref{fgfg1}) and (\ref{fgfg2}) imply that (\ref{f-g}) holds in all the cases (i)--(iii). Note that we fail to have (\ref{f-g}), if we simply follow the proof of Proposition \ref{key} with $\tilde{U}^{i^\ast}(\delta)$ in the definition of $w^k_{m+1}$.  We now do not have the estimate $|w^0_{m+1}-v^0_{m+1}|<\epsilon_{19}(\Delta)$ on $\mathcal{G}_{odd}|_{k=0}$. However, due to the choice of $\hat{v}$ and the normalization of $\hat{v},\bv,\bv_\Delta$ at $\gamma^\ast_{i^\ast}(0)$, it follows from the relation $\bv(x,0)-\hat{v}(x,0)=\int_{\gamma^\ast_{i^\ast}(0)}^x(\bv_x(y,0)-\hat{v}_x(y,0))dy$ that we have 
\[ {\rm (i)} \Rightarrow 
\bv(x,0)-\hat{v}(x,0)= \left\{ \begin{array}{ll}
\mbox{$0$\,\,\, for $\gamma^\ast_{i^\ast}(0)\le x\le\gamma^\ast_{i^\ast}(0)+\delta$},\\
 \mbox{negative\,\,\,  for $\gamma^\ast_{i^\ast-1}(0)\le x<\gamma^\ast_{i^\ast}(0)$},\qquad\qquad\qquad\qquad
\end{array} \right.
\]
\[ {\rm (ii)} \Rightarrow 
\bv(x,0)-\hat{v}(x,0)= \left\{ \begin{array}{ll}
\mbox{$0$\,\,\, for $\gamma^\ast_{i^\ast}(0)-\delta\le x\le\gamma^\ast_{i^\ast}(0)$},\\
 \mbox{negative\,\,\,  for $\gamma^\ast_{i^\ast}(0)< x\le\gamma^\ast_{i^\ast+1}(0)$},\qquad\qquad\qquad\qquad\,\,
\end{array} \right.
\]
\[ {\rm (iii)} \Rightarrow 
\bv(x,0)-\hat{v}(x,0)= \left\{ \begin{array}{ll}
\mbox{$0$\,\,\, for $x=\gamma^\ast_{i^\ast}(0)$},\\
 \mbox{negative\,\,\,  for $\gamma^\ast_{i^\ast-1}(0)\le x\le\gamma^\ast_{i^\ast+1}(0)$ and $x\neq\gamma^\ast_{i^\ast}(0)$}.\,\,\,\,\,\,
\end{array} \right.
\]
\noindent Since $\bv_\Delta\to\bv$ uniformly as $\Delta\to0$ and 
\begin{eqnarray*}
w^0_{m+1}-v^0_{m+1}&=&\bv^0_{m+1}-\hat{v}(x_{m+1},0)\\
&\le& \epsilon_{19}(\Delta)+\bv(x_{m+1},0)-\hat{v}(x_{m+1},0) \mbox{\,\,\,\, on $\tilde{V}(\delta)$},\\
w^0_{m+1}-v^0_{m+1}&=&\bv^0_{m^\ast(0)+1}-v_{m^\ast(0)+1}^0\\
&\le& \epsilon_{19}(\Delta)+\bv(x_{m^\ast(0)+1},0)-\hat{v}(x_{m^\ast(0)+1},0) \mbox{\,\,\,\, for ${m^\ast(k)}<{m}$},\\
w^0_{m+1}-v^0_{m+1}&=&\bv^0_{m_\ast(0)+1}-v_{m_\ast(0)+1}^0\\
&\le& \epsilon_{19}(\Delta)+\bv(x_{m_\ast(0)+1},0)-\hat{v}(x_{m_\ast(0)+1},0) \mbox{\,\,\,\, for ${m}<{m_\ast(k)}$},
\end{eqnarray*}
we have in all the cases (i)--(iii),
\begin{eqnarray*}
w^0_{m+1}-v^0_{m+1}<\epsilon_{19}(\Delta)\mbox{\,\,\, on $\mathcal{G}_{odd}|_{k=0}$}.
\end{eqnarray*}
Take $n$ so that $w^0_{n+1}-v^0_{n+1}$ is equal to  
$$\max_{\tilde{U}^{i^\ast}(\delta/3)|_{k=0}}(w^0_{m+1}-v^0_{m+1}).$$
Here, ``$\max_{\tilde{U}^{i^\ast}(\delta/3)|_{k=0}}$'' is not ``$\max_{\tilde{V}(\delta/3)|_{k=0}}$''. Note that 
$$  -\epsilon_{20}(\Delta)<w^0_{n+1}-v^0_{n+1},$$
 because 
\begin{eqnarray*}
w^0_{n+1}-v^0_{n+1}&=&\bv_\Delta(x_{n+1},0)-\bv(x_{n+1},0)+\bv(x_{n+1},0)-\hat{v}(x_{n+1},0)\\
&\ge& \bv_\Delta(x_{m(\gamma^\ast_{i^\ast}(0))},0)-\bv(x_{m(\gamma^\ast_{i^\ast}(0))},0)+\bv(x_{m(\gamma^\ast_{i^\ast}(0))},0)-\hat{v}(x_{m(\gamma^\ast_{i^\ast}(0))},0),
\end{eqnarray*}
$\bv_\Delta-\bv\to0$ on $\tilde{U}^{i^\ast}(\delta/3)|_{k=0}$ and $\bv(\gamma^\ast_{i^\ast}(0),0)-\hat{v}(\gamma^\ast_{i^\ast}(0),0)=0$.  
Then, we can estimate $w^0_{n+1}-v^0_{n+1}$ from the above with the minimizer $\xi^\ast$ for $v^0_{n+1}$ in the same manner as that of the proof of Proposition \ref{key}, obtaining 
\begin{eqnarray}\label{goal}
(h(c )-h_\Delta(c ))qT\le T\Gamma^{i^\ast}\frac{\dx}{2\lambda}+\nu(\delta)\frac{\dx}{2\lambda}+\frac{\epsilon_{21}(\Delta)}{\delta^2}\dx+\epsilon_{22}(\Delta)\dx. 
\end{eqnarray} 
On the other hand, by the argument in the proof of Proposition \ref{key}, we have for $\gamma^\ast_i$ ($i\neq i^\ast$) at which $\bv$ has transition points, 
$$(h(c )-h_\Delta(c ))qT\ge T\Gamma^{i}\frac{\dx}{2\lambda}-\nu(\delta)\frac{\dx}{2\lambda}-\frac{\epsilon_{23}(\Delta)}{\delta^2}\dx-\epsilon_{24}(\Delta)\dx.$$  
Thus, by (A3), we reach a contradiction as $\Delta\to0$.      
\end{proof}
We extend Proposition \ref{existence}: 
\begin{Prop}\label{extension}
For each $c\in(c_0,c_1)$, the limit of any convergent subsequence $\{\bv^{(c )}_\Delta\}$ has transition points at $\gamma^\ast_{i^\ast}$. 
\end{Prop}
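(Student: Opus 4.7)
The plan is to modify the proof of Proposition \ref{existence} so that its contradiction can be reproduced for every $c\in(c_0,c_1)$, not only on $J^-\cap J^+$. The crucial ingredient in that proof was an auxiliary $\Z^2$-periodic viscosity solution $\hat v^{(c)}$ of (\ref{HJ}) whose graph contains $pr\mathcal{U}^{i^\ast}_{loc}(\delta)$ and which dominates any possible subsequence limit $\bv^{(c)}$ near $\gamma^\ast_{i^\ast}(0)$. I would first establish that such $\hat v^{(c)}$ still exists for every $c\in(c_0,c_1)$. Starting from a reference configuration that carries a transition at $\gamma^\ast_{i^\ast}$ (for instance the solution $\hat v^{(c')}$ of Proposition \ref{existence} with some $c'\in J^-\cap J^+$), one can, using the construction of Subsection 3.3, redistribute $C^\pm$-choices and shocks on the segments $\Theta_{j,k}$ with $j\neq i^\ast$ without disturbing the smooth passage through $\gamma^\ast_{i^\ast}$. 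The integral $\int_0^1 U_0\,dx$ attainable in this way depends continuously on the shock positions and approaches $c_0$ (resp.\ $c_1$) as shocks are pushed so that $C^-$ (resp.\ $C^+$) dominates; hence every $c\in(c_0,c_1)$ can be realized.

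With $\hat v^{(c)}$ in hand, I would normalize $\hat v^{(c)}(\gamma^\ast_{i^\ast}(0),0)=\bv^{(c)}(\gamma^\ast_{i^\ast}(0),0)=0$ and proceed exactly as in Proposition \ref{existence}. If $\bv^{(c)}$ does not have a transition at $\gamma^\ast_{i^\ast}$, then its graph near $\gamma^\ast_{i^\ast}(0)$ must, by the entropy condition together with the hyperbolic structure of (A1)--(A2), be one of the three configurations (i)--(iii) of Proposition \ref{existence}. In each case the relation $\bv^{(c)}(x,0)-\hat v^{(c)}(x,0)\le 0$ holds on the appropriate (one- or two-sided) neighborhood $V(\delta)$ of $\gamma^\ast_{i^\ast}(0)$. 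Taking a $C^2$ extension $v$ of $\hat v^{(c)}|_{V(\delta)}$, defining the discrete comparison function $w^k_{m+1}$ by gluing $\bv^k_{m+1}$ on $\tilde V(\delta)$ with $v$+const elsewhere, and verifying $|f-g|<\epsilon_2(\Delta)$, $w^0_{m+1}-v^0_{m+1}<\epsilon_{19}(\Delta)$, together with $w^0_{n+1}-v^0_{n+1}>-\epsilon_{20}(\Delta)$ at a maximizing $n$ (as in the proof of Proposition \ref{existence}), the random-walk argument yields the upper estimate
\[
(h(c)-h_\Delta(c))qT\ \le\ T\Gamma^{i^\ast}\,\frac{\dx}{2\lambda}+\nu(\delta)\frac{\dx}{2\lambda}+\frac{\epsilon(\Delta)}{\delta^2}\dx+\epsilon'(\Delta)\dx.
\]
On the other hand, since $\bv^{(c)}$ is $\Z^2$-periodic and its graph is assembled from stable/unstable manifold segments of $\gamma^\ast_1,\dots,\gamma^\ast_I$, it must possess at least one transition at some $\gamma^\ast_j$ (otherwise the graph could not close up periodically); by assumption $j\neq i^\ast$. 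The lower estimate at this $j$ from the argument of Proposition \ref{key} then gives the matching inequality
\[
(h(c)-h_\Delta(c))qT\ \ge\ T\Gamma^{j}\,\frac{\dx}{2\lambda}-\nu(\delta)\frac{\dx}{2\lambda}-\frac{\epsilon(\Delta)}{\delta^2}\dx-\epsilon'(\Delta)\dx,
\]
and (A3), i.e.\ $\Gamma^{i^\ast}<\Gamma^j$, produces a contradiction upon letting first $\delta\to 0$ at an appropriate rate and then $\Delta\to 0$.

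The main obstacle is the combinatorial construction in the first step: one must argue that freezing a transition at $\gamma^\ast_{i^\ast}$ still leaves enough freedom in the $C^\pm$ choices and shock positions at other $\Theta_{j,k}$ to attain every value of $c\in(c_0,c_1)$. A secondary but important subtlety is verifying that, under the altered $\hat v^{(c)}$ (which now has neither the symmetric $\mathcal{U}^-_k\cup\mathcal{U}^+_k$ structure nor membership in $J^-\cap J^+$), the cases (i)--(iii) from Proposition \ref{existence} still exhaust the possibilities for $\bv^{(c)}$ without transition at $\gamma^\ast_{i^\ast}$, and that in each such case the sign relation $\bv^{(c)}\le\hat v^{(c)}$ on the chosen $V(\delta)$ continues to hold so that the key estimate $w^0_{m+1}-v^0_{m+1}<\epsilon_{19}(\Delta)$ remains valid.
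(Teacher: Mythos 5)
Your plan is a genuinely different strategy from the paper's, and it has a gap that you partly flag but do not resolve — and I believe it cannot be resolved in the way you suggest.

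The paper proves Proposition~\ref{extension} without re-running the random-walk comparison at all for the new values of $c$. It sets $c^\ast=\max J^-\cap J^+$, uses Proposition~\ref{existence} plus Propositions~\ref{key} and \ref{peierls} to pin down graph$(c^\ast+\bar v^{(c^\ast)}_x)|_{t=0}$ exactly, and then invokes the monotonicity in $c$ of the \emph{discrete} solutions from Proposition~\ref{preliminary}(2): $c<\tilde c\Rightarrow c+\bar u^k_m(c)<\tilde c+\bar u^k_m(\tilde c)$. Passing this to the limit gives $c^\ast+\bar v^{(c^\ast)}_x\le c+\bar v^{(c)}_x$ a.e., which then forces any subsequence limit at $c>c^\ast$ either to have the transition at $\gamma^\ast_{i^\ast}$ or to contain the whole upper separatrix, the latter being excluded for $c<c_1$. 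This ordering trick — comparing the discrete solutions for \emph{two different} values of $c$ rather than comparing with an auxiliary exact solution at the \emph{same} $c$ — is the idea you are missing.

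The obstruction to your approach is concrete and is not merely the combinatorial bookkeeping you describe. For the comparison argument of Proposition~\ref{existence} to go through, the auxiliary function $\hat v^{(c)}$ must be $C^2$ (so that the glued $w^k_{m+1}$, the source terms $f,g$ and the boundary matching $\bar v_x=\hat v_x$ on $\partial V(\delta)$ all make sense) on the \emph{entire} set $V(\delta)$, and in cases (i)--(iii) this set extends from $\gamma^\ast_{i^\ast}$ all the way to the adjacent periodic orbits $\gamma^\ast_{i^\ast-1}(t)$ and/or $\gamma^\ast_{i^\ast+1}(t)$. Smoothness of $\hat v^{(c)}$ on this full strip is precisely the requirement that graph$(c+\hat v^{(c)}_x)|_{t=0}$ contain the whole of $\mathcal{U}^-_k$ and/or $\mathcal{U}^+_k$, not just a $\delta$-local piece of $\mathcal{U}^{i^\ast}_{loc}$. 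By the area computation that defines $J^\pm$, this is possible only for $c$ in $J^-$, $J^+$, or $J^-\cap J^+$ respectively. If instead you insert a shock inside $\Theta_{i^\ast-1,k}$ or $\Theta_{i^\ast,k}$ to push $c$ outside $J^-\cap J^+$ while keeping a local transition at $\gamma^\ast_{i^\ast}$, then $\hat v^{(c)}$ fails to be $C^2$ on $V(\delta)$, the boundary matching $\bar v_x=\hat v_x$ on $\partial V(\delta)$ no longer holds away from the saddles, and the sign estimate $\bar v(x,0)-\hat v(x,0)\le 0$ on $V(\delta)$ is also no longer guaranteed. Shrinking $V(\delta)$ does not help, because then $\partial V(\delta)$ lies in the interior of a strip where $\bar v_x$ is on a stable-manifold branch and $\hat v_x$ is on an unstable-manifold branch, and these do not agree away from the saddle points. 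So the ``main obstacle'' you identify is not merely unverified — the construction you propose would invalidate the very comparison inequalities it is supposed to feed into, and the monotonicity argument of the paper is what sidesteps this.
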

\begin{proof}
Let $c^\ast:=\max J^-\cap J^+$. Then, Proposition \ref{existence}  implies that the limit $\bv^{(c^\ast)}$ of any convergent subsequence $\{\bv^{(c^\ast)}_\Delta\}$ has transition points at $\gamma^\ast_{i^\ast}$. By (A3) and Proposition \ref{key}, $\bv^{(c^\ast)}$  has transition points only at $\gamma^\ast_{i^\ast}$. It follows from Subsection 3.3 and the choice of $c^\ast$that we may have the $\Z^2$-periodic viscosity solution $\hat{v}^{(c^\ast)}$ such that  graph$(c^\ast+\hat{v}^{(c^\ast)}_x)|_{t=0}\supset\cup_{0\le k<q}(\mathcal{U}^-_k\cup\mathcal{U}^+_k)$ and graph$(c^\ast+\hat{v}^{(c^\ast)}_x)|_{t=0}\setminus \cup_{0\le k<q} \mathcal{U}^-_k$ is a part of the upper separatrix at $t=0$. It is clear that $\hat{v}^{(c^\ast)}$ has transition points only at $\gamma^\ast_{i^\ast}$.  By Proposition \ref{peierls}, $\bv^{(c^\ast)}$ and $\hat{v}^{(c^\ast)}$ must coincide (up to constants) with the Peierls barrier. Therefore we see that  
$$\mbox{graph$(c^\ast+\bar{v}^{(c^\ast)}_x)|_{t=0}=$graph$(c^\ast+\hat{v}^{(c^\ast)}_x)|_{t=0}$}.$$  
Suppose that there exists $c\in (c^\ast,c_1)$ for which ($\ast$) does not hold:

 ($\ast$)\quad  The limit of any convergent subsequence $\{\bv^{(c )}_\Delta\}$ has transition points at $\gamma^\ast_{i^\ast}$. 

\noindent Then, we have a convergent subsequence $\{\bvc_\Delta\}$ whose limit $\bvc$ does not have transition points at $\gamma^\ast_{i^\ast}$. It follows from (2) of Proposition \ref{preliminary} that we have $c^\ast+(\bv^{(c^\ast)}_\Delta)_x<c+(\bvc_\Delta)_x$ a.e., obtaining  $c^\ast+\bv^{(c^\ast)}_x|_{t=0}\le c+\bvc_x|_{t=0}$ a.e. Hence, if graph$(c+\bvc_x)|_{t=0}$ does not contain $\mathcal{S}^-_k$, then $\bvc$ must have transition points at $\gamma^\ast_{i^\ast}$. Therefore, we see that graph$(c+\bvc_x)|_{t=0}\supset\cup_{0\le k<q}\mathcal{S}^-_k$. However, this means that graph$(c+\bvc_x)|_{t=0}$ coincides with the whole upper separatrix at $t=0$, which is only allowed for $c=c_1$. Therefore, ($\ast$) holds for all $c\in(c^\ast,c_1)$.  

 We have the lower extension with $c_\ast:=\min J^-\cap J^+$ in the same manner. 
\end{proof}

\begin{proof}[Proof of Theorem \ref{main}.] 
By Proposition \ref{extension}, the limit $\bvc$ of any convergent subsequence must have transition points at $\gamma^\ast_{i^\ast}$. By (A3) and Proposition \ref{key}, $\bvc$ has transition points only at $\gamma^\ast_{i^\ast}$. By Proposition \ref{peierls}, such $\bvc$ is unique up to constants and coincides with $h^{(c )}_p(\gamma^\ast_{i^\ast}(0),0;\cdot,\cdot)$.  Therefore, with the normalization, we conclude that $\bvc_\Delta\to h^{(c )}_p(\gamma^\ast_{i^\ast}(0),0;\cdot,\cdot)$ as $\Delta\to0$ in the whole sequence. By (5) of Proposition \ref{preliminary}, this  implies that $\buc_\Delta\to (h^{(c )}_p(\gamma^\ast_{i^\ast}(0),0;\cdot,\cdot))_x$ as $\Delta\to0$ in the whole sequence. 
\end{proof}

\end{document}